\newtheorem{theorem}{Theorem}[section]
\newtheorem{proposition}[theorem]{Proposition}
\newtheorem{corollary}[theorem]{Corollary}
\newtheorem{lemma}[theorem]{Lemma}
\theoremstyle{definition}
\newtheorem{definition}[theorem]{Definition}
\newtheorem{example}[theorem]{Example}
\newtheorem{remark}[theorem]{Remark}
\begin{document}

\title{Entropy bifurcation of neural networks on Cayley trees} 

\keywords{Neural networks, learning problem, Cayley tree, separation property, entropy spectrum, minimal entropy}
\subjclass{Primary 37A35, 37B10, 92B20}

\author{Jung-Chao Ban}
\address[Jung-Chao Ban]{Department of Applied Mathematics, National Dong Hwa University, Hualien 97401, Taiwan, ROC.}
\email{jcban@gms.ndhu.edu.tw}

\author[Chih-Hung Chang]{Chih-Hung Chang*}
\thanks{*Author to whom any correspondence should be addressed.} 
\author{Nai-Zhu Huang}
\address[Chih-Hung Chang and Nai-Zhu Huang]{Department of Applied Mathematics, National University of Kaohsiung, Kaohsiung 81148, Taiwan, ROC.}
\email{chchang@nuk.edu.tw; naizhu7@gmail.com}

\date{January 15, 2018}

\baselineskip=1.5\baselineskip

\begin{abstract}
It has been demonstrated that excitable media with a tree structure performed better than other network topologies, it is natural to consider neural networks defined on Cayley trees. The investigation of a symbolic space called tree-shift of finite type is important when it comes to the discussion of the equilibrium solutions of neural networks on Cayley trees. Entropy is a frequently used invariant for measuring the complexity of a system, and constant entropy for an open set of coupling weights between neurons means that the specific network is stable. This paper gives a complete characterization of entropy spectrum of neural networks on Cayley trees and reveals whether the entropy bifurcates when the coupling weights change.
\end{abstract}

\maketitle

\section{Introduction}

The human brain has recently been revealed as a system exhibiting traces of criticality; the corresponding spatiotemporal patterns are fractal-like. Gollo \emph{et al.}~\cite{GKC-SR2013} infer that criticality may arise from balanced dynamics within individual neurons. Neural networks have been developed to mimic brain behavior for the past few decades; they are widely applied in many disciplines such as signal propagation between neurons, deep learning, image processing, and information technology \cite{AA-N2003,B-FTML2009,CGM+-NNITo1992,Fuk-NN2013}. Chernihovskyi \emph{et al.}~\cite{CMM+-JCN2005} implement cellular neural networks on simulating nonlinear excitable media and develop a relevant device to predict epileptic seizures.
The overwhelming majority of neural network models adopts an $n$-dimensional lattice as the network's topology. Gollo \emph{et al.}~\cite{GKC-PCB2009,GKC-PRE2012,GKC-SR2013} propose a neural network with a tree structure; excitable media with a tree structure performed better than other network topologies since it attains larger dynamic range (cf.~\cite{AC-PRE2008,KC-NP2006,LSR-PRL2011}). It is of interest to ask the following problem.

\noindent \textbf{Problem 1.} How to measure the complexity of a tree structure neural network?

Alternatively, it is important to know how much information the neural network could store. On the other hand, it is of interest to know whether a neural network ``avalanches", which means such a network is sensitive. More precisely, some small modification of parameters could lead to tremendously different dynamics such as the exponential decay of storage of information. One of the most frequently studied neural networks is the Hopfield neural network consisting of locally coupled neurons, in which the behavior of each neuron is represented by a differential equation. Beyond being essential for understanding the dynamics of differential equations, the investigation of equilibrium solutions is related to elucidating the long-term memory of brain. Whenever there are only finitely many equilibrium solutions, the investigation of equilibrium solutions is then equivalent to studying shift spaces in symbolic dynamical systems.

A one-dimensional shift space is a set consisting of right-infinite or bi-infinite words which avoid words in a so-called \emph{forbidden set} $\mathcal{F}$ and is denoted by $\mathsf{X}_{\mathcal{F}}$. A shift space $\mathsf{X}_{\mathcal{F}}$ is called a shift of finite type (SFT) if $\mathcal{F}$ is a finite set. A significant invariant of shift spaces is the topological entropy, which reflects how much information a network can store. While there is an explicit formula for the entropy of $1$-d SFTs, there is no algorithm for the computation of the topological entropy of multidimensional SFTs so far (cf.~\cite{LM-1995, MP-ETDS2013, MP-SJDM2013, HM-AoM2010}).

Aubrun and B\'{e}al \cite{AB-TCS2012, AB-TCS2013} introduce the notion of tree-shifts, which are shift spaces defined on Cayley trees, and then study the classification theory up to conjugacy, languages, and its application to automaton theory. It is noteworthy that such tree-shifts constitute an intermediate class in between one-sided and multidimensional shifts. Ban and Chang \cite{BC-2017, BC-N2017} propose an algorithm for computing the entropy of a tree-shift of finite type (TSFT). The computation of the rigorous value of entropy is tricky due to the double exponential growth rate of the patterns for a TSFT (see Section 2 for more details).

For the case where TSFTs come from the equilibrium solutions of neural networks (on Cayley trees), the forbidden sets are constrained by the so-called \emph{separation property}; this makes the entropy spectrum discrete (Theorem \ref{thm:CTNN-entropy-set}). Elucidating the phenomenon of ``neural avalanches'' is related to the study of entropy bifurcation or entropy minimality problems. It is known that an irreducible $\mathbb{Z}^1$ SFT is entropy minimal; that is, any proper subshift $Y \subset X$ has smaller entropy than that of an irreducible SFT $X$. For $r \geq 2$, every $\mathbb{Z}^r$ SFT having the mixing property called \emph{uniform filling property} is entropy minimal while there is a non-trivial \emph{block gluing} $\mathbb{Z}^r$ SFT which is not entropy minimal. Readers are referred to \cite{BPS-TAMS2010,LM-1995,QS-ETDS2003} for more details. Proposition \ref{prop:entropy-region-W-equation} gives an explicit formula for the coupling weights between neurons which make CTNNs entropy minimal, and the entropy bifurcation diagram is revealed (Figure \ref{fig:entropy-diagram-general-case}). A remarkably novel phenomenon is that the entropy of a CTNN with the nearest neighborhood is either $0$ or $\ln d$, where $d$ is the number of children of each node.

The structure of this paper is as follows. Section 2 introduces the notion of tree-shifts and the algorithm for the computation of entropy of TSFTs. Section 3, aside from demonstrating how the investigation of the equilibrium solutions of CTNNs relates to the discussion of TSFTs, studies the learning problem of CTNNs; the necessary and sufficient condition of the forbidden sets of TSFTs corresponding to CTNNs is revealed. After demonstrating the discreteness of entropy spectrum of CTNNs, the entropy minimality problem is affirmatively solved in Section 3. Conclusion and discussion are given in Section 4.

\section{Symbolic Dynamics on Cayley Trees} \label{sec:dynamics-cayley-tree}

This section recalls some definitions and results of symbolic dynamics on Cayley trees. A novel phenomenon about the entropy spectrum of tree-shifts of finite type is demonstrated herein.

\subsection{Definitions and Notations}
A Cayley tree, roughly speaking, is a graph without cycles. Two kinds of Cayley trees are mostly discussed: rooted Cayley trees and bi-rooted Cayley trees. A rooted $d$-ary Cayley tree (Figure \ref{fig:GMTS-binary-tree}) can be seen as a directed graph such that the outdegree of each vertex is $d$ while a bi-rooted $d$-ary tree (also known as Bethe lattice, see \cite{Rozikov-2013} for more details) is an undirected graph such that the degree of each vertex is $d+1$. In this paper, we focus on the rooted Cayley tree for clarity, and the discussion can extend to the Bethe lattice. In the rest of this elaboration, we refer to rooted Cayley tree as Cayley tree unless otherwise stated.

Alternatively, a Cayley tree of order $d$ is a free semigroup $\Sigma^*$ generated by $\Sigma = \{g_1, g_2, \ldots, g_d\}$, where $d \in \mathbb{N}$. A \emph{labeled tree} $t$ over a finite alphabet $\mathcal{A}$ is a function from $\Sigma^*$ to $\mathcal{A}$; a \emph{node} of a labeled tree is an element of $\Sigma^*$, and the identity element relates to the \emph{root} of the tree. Suppose $x = x_1 x_2 \ldots x_i, y = y_1 y_2 \ldots y_j \in \Sigma^*$ are nodes of a tree, we say that $x$ is a \emph{prefix} of $y$ if and only if $i \leq j$ and $x_k = y_k$ for $1 \leq k \leq i$, and $xy = x_1 \cdots x_i y_1 \cdots y_j$ means the concatenation of $x$ and $y$. A subset $L \subset \Sigma^*$ is called \emph{prefix-closed} if the prefix of every element of $L$ belongs to $L$. A \emph{pattern} is a function $u: L \to \mathcal{A}$ with \emph{support} $L$ and is called an \emph{$(n+1)$-block} if its support $L = x \Delta_n$ for some $x \in \Sigma^*$, where $\Delta_n = \{y = y_1 y_2 \cdots y_n: y_i \in \Sigma \bigcup \{e\}\}$.

\begin{figure}
\centering
\begin{subfigure}{0.45\textwidth}
\begin{center}
\includegraphics[scale=0.8,page=1]{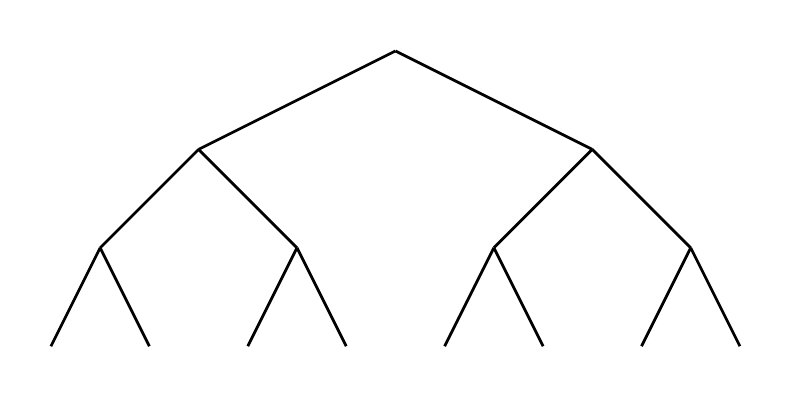}
\end{center}
\caption{Binary Cayley tree.}
\label{fig:GMTS-binary-tree}
\end{subfigure}
\begin{subfigure}{0.45\textwidth}
\begin{center}
\includegraphics[scale=0.8,page=2]{TreeNN-EntropyMinimal-20171215-pics}
\end{center}
\caption{Allowed pattern (left) and forbidden pattern (right).}\label{fig:GMTS-forbidden}
\end{subfigure}

\caption{A (rooted) binary Cayley tree is seen as a directed graph without cycles such that the outdegree of every vertex is $2$; herein we omit the arrow of the edge as seen in (A). A golden mean shift on binary Cayley tree is a tree-shift $\mathsf{X}_{\mathcal{F}}$ such that no consecutive $1$'s is allowed. Allowed and forbidden patterns are presented in (B).}\label{fig:golden-mean-tree-shift}
\end{figure}

Suppose that $u$ is a pattern and $t$ is a labeled tree. Let $s(u)$ denote the support of $u$. We say that $u$ is accepted by $t$ if there exists $x \in \Sigma^*$ such that $u_y = t_{xy}$ for every node $y \in s(u)$. In this case, we say that $u$ is a pattern of $t$ rooted at the node $x$. A tree $t$ is said to \emph{avoid} $u$ if $u$ is not accepted by $t$; otherwise, $u$ is called an \emph{allowed pattern} of $t$ (see Figure \ref{fig:GMTS-forbidden} for instance).

We denote by $\mathcal{T}$ (or $\mathcal{A}^{\Sigma^*}$) the set of all labeled trees on $\mathcal{A}$. The shift transformation $\sigma: \Sigma^* \times \mathcal{T} \to \mathcal{T}$ is defined by $(\sigma_w t)_x = t_{wx}$ for all $w, x \in \Sigma^*$. Given a collection of patterns $\mathcal{F}$, let $\mathsf{X}_{\mathcal{F}}$ denote the set of trees avoiding any element of $\mathcal{F}$. A subset $X \subseteq \mathcal{T}$ is called a \emph{tree-shift} if $X = \mathsf{X}_{\mathcal{F}}$ for some $\mathcal{F}$. We say that $\mathcal{F}$ is \emph{a set of forbidden patterns} (or \emph{a forbidden set}) of $X$. A tree-shift $X = \mathsf{X}_{\mathcal{F}}$ is called a \emph{tree-shift of finite type} (TSFT) if the forbidden set $\mathcal{F}$ is finite; we say that $\mathsf{X}_{\mathcal{F}}$ is a \emph{Markov tree-shift} if $\mathcal{F}$ consists of two-blocks. Ban and Chang \cite{BC-TAMS2017} demonstrate that every TSFT can be treated as a Markov tree-shift after recoding, which extends a classical result in symbolic dynamical systems.

\begin{proposition}[See \cite{BC-TAMS2017}]\label{prop:TSFT-is-vertex-shift-is-Markov}
Every tree-shift of finite type is conjugated to a Markov tree-shift.
\end{proposition}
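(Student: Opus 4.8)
The plan is to adapt the \emph{higher block recoding} that converts a one-dimensional SFT into a vertex shift, carrying it out on the tree. Since the forbidden set $\mathcal{F}$ is finite, there is $N \in \mathbb{N}$ such that the support of every pattern in $\mathcal{F}$ is contained in $\Delta_{N}$; replacing each forbidden pattern by all of its completions to patterns with support $\Delta_{N}$, I may assume without loss of generality that $\mathcal{F}$ consists of $(N+1)$-blocks. The target alphabet is then $\tilde{\mathcal{A}}$, the (finite) set of all allowed $N$-blocks of $X$, that is, all patterns with support $\Delta_{N-1}$ occurring in some tree of $X$.

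Next I would define the recoding map $\Phi \colon X \to \tilde{\mathcal{A}}^{\Sigma^*}$ by letting $(\Phi t)_x$ be the $N$-block of $t$ rooted at $x$, namely the restriction $t|_{x\Delta_{N-1}}$ reindexed to be rooted at $e$, and set $\tilde X = \Phi(X)$. A direct computation gives $(\Phi \sigma_w t)_x = (\sigma_w \Phi t)_x$ for all $w,x \in \Sigma^*$, so $\Phi$ commutes with the shift action, and it is clearly continuous. The candidate inverse is the $1$-block map $\Psi$ with $(\Psi \tilde t)_x = (\tilde t_x)_e$ reading off the root label of each symbol; it too is continuous and shift-commuting, and $\Psi \circ \Phi = \mathrm{id}_X$ is immediate.

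The core of the argument is to show that $\tilde X$ is cut out by two-block constraints. I would declare a two-block $(a; b_1, \dots, b_d)$ --- a parent symbol $a$ together with its $d$ children symbols $b_1, \dots, b_d$ --- to be admissible precisely when (i) the overlaps are consistent, i.e.\ $a_{g_i y} = (b_i)_y$ for every $i$ and every $y \in \Delta_{N-2}$, and (ii) the $(N+1)$-block obtained by placing $a$ on $\Delta_{N-1}$ and filling the bottom level from the $b_i$ avoids $\mathcal{F}$. Every height-$N$ block occurring in a tree of $X$ arises exactly this way from a parent symbol and its children, so condition (ii) captures all of the original forbidden-pattern constraints, while condition (i) is what guarantees that the symbols glue into a genuine tree. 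Since there are only finitely many two-blocks over the finite alphabet $\tilde{\mathcal{A}}$, the set $\tilde{\mathcal{F}}$ of inadmissible two-blocks is finite and consists of two-blocks, so $\mathsf{X}_{\tilde{\mathcal{F}}}$ is a Markov tree-shift.

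It remains to check $\tilde X = \mathsf{X}_{\tilde{\mathcal{F}}}$ and that $\Phi$ is a bijection onto it; here is where I expect the only real work. The inclusion $\tilde X \subseteq \mathsf{X}_{\tilde{\mathcal{F}}}$ is routine from the definitions. For the reverse inclusion and for $\Phi \circ \Psi = \mathrm{id}$, I would fix $\tilde t \in \mathsf{X}_{\tilde{\mathcal{F}}}$ and prove by induction on the length of $y$ that the overlap condition (i) propagates to the identity $(\tilde t_x)_y = (\tilde t_{xy})_e$ for all $x \in \Sigma^*$ and all $y \in \Delta_{N-1}$; the base case is trivial and the inductive step peels off the first generator of $y$ using (i). This identity says exactly that $\Phi(\Psi \tilde t) = \tilde t$, so $\Psi \tilde t \in X$ (its height-$N$ blocks avoid $\mathcal{F}$ by condition (ii)) and $\tilde t \in \tilde X$. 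Combined with the earlier relations, $\Phi$ is a shift-commuting homeomorphism from $X$ onto the Markov tree-shift $\mathsf{X}_{\tilde{\mathcal{F}}}$, i.e.\ a conjugacy. The main obstacle is the bookkeeping in this propagation-of-consistency induction, since unlike the one-dimensional case the overlap must be tracked simultaneously along all $d$ branches of the tree.
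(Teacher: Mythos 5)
The paper does not prove this proposition at all; it is quoted from Ban and Chang \cite{BC-TAMS2017}, where the argument is precisely the higher-block recoding you carry out. Your proposal is correct --- the completion of $\mathcal{F}$ to $(N+1)$-blocks, the sliding-block map $\Phi$ with one-block inverse $\Psi$, the overlap-plus-avoidance definition of admissible two-blocks, and the induction propagating $(\tilde t_x)_y = (\tilde t_{xy})_e$ all check out --- so it matches the cited proof in approach and substance.
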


Proposition \ref{prop:TSFT-is-vertex-shift-is-Markov} indicates that the investigation of Markov tree-shifts is essential for characterizing TSFTs. For the rest of this paper, a TSFT is referred to as a Markov tree-shift unless otherwise stated.

\subsection{Entropy of tree-shifts}

An important invariant of shift spaces is topological entropy, which measures the growth rate of the number of the admissible patterns. Such an invariant reflects the complexity on its own right, we refer readers to \cite{ASY-1997} for more details. The \emph{entropy} of tree-shifts is defined as
\begin{equation}\label{eq:entropy-tree-shift}
h(X)=\limsup_{n \rightarrow \infty} \frac{\ln^2 |B_{n}(X)|}{n},
\end{equation}%
where $B_{n}(X)$ is the collection of $n$-blocks of $X$, $|B_{n}(X)|$ means the cardinality of $B_{n}(X)$, and $\ln^2 = \ln \circ \ln $. Ban and Chang indicate that the limit $h(X)=\lim_{n\rightarrow \infty} \ln^2 |B_{n}(X)|/n$ exists if $X$ is a TSFT and $h(X) \in \{0, \ln 2\}$ for each TSFT $X$ when $d=2$ \cite{BC-2017,BC-N2017}; furthermore, a sufficient condition for positive entropy is revealed \cite{BC-JMP2017}.
For the computation of entropy, Ban and Chang introduce the notion of \emph{system of nonlinear recursive equations}.

\begin{definition}\label{def:SNRE}
Given $k \in \mathbb{N}$, we say that a sequence $\{\alpha_{1;n}, \alpha_{2;n}, \ldots, \alpha_{k;n}\}_{n \in \mathbb{N}}$ forms a \emph{system of nonlinear recursive equations (SNRE)} of degree $(d, k)$ if
\begin{equation*}
\alpha_{i;n} = F_i(n) \quad \text{for} \quad n \geq 2, 1 \leq i \leq k,
\end{equation*}%
with initial condition $\alpha_{i;1} \in \mathbb{N}$ for $1 \leq i \leq k$, where
$$
F_i(n) = \sum_{c_1 + c_2 + \cdots + c_k = d} r_{i; c_1, \ldots, c_k} \alpha_{1; n-1}^{c_1} \alpha_{2; n-1}^{c_2} \cdots \alpha_{k; n-1}^{c_k}
$$
with $r_{i; c_1, \ldots, c_k} \in \mathbb{Z}^+$.
\end{definition}

Let $F = \{F_1, F_2, \ldots, F_k\}$ be defined in Definition \ref{def:SNRE}. We also say that the sequence $\{\alpha_{1;n}, \alpha_{2;n}, \ldots, \alpha_{k;n}\}_{n \geq \mathbb{N}}$ is defined by $F$. For simplicity, $F$ is called the SNRE corresponding to $X$. Suppose that $F$ is given. For $1 \leq i \leq k$, we define the \emph{indicator vector} $v_{F_i}$ of $F_i$ as $v_{F_i} = (r_{i; c_1, \ldots, c_k})$. Note that the indicator vector $v_{F_i}$ is unique up to permutation. For the convenience, we represent the indicator vector with respect to the lexicographic order. The matrix
$I_F = \begin{pmatrix}
v_{F_1} \\
v_{F_2} \\
\vdots \\
v_{F_k}
\end{pmatrix}$
is called the \emph{indicator matrix} of $F$. For example, suppose that the sequence $\{\alpha_{1;n}, \alpha_{2;n}\}_{n \geq \mathbb{N}}$ forms the SNRE
\begin{equation}\label{eq:example-SNRE}
\left\{ 
\begin{array}{l}
\alpha_{1;n} = F_1 = \alpha_{1;n-1}^{2} +\alpha_{2;n-1}^{2}, \\ 
\alpha_{2;n} = F_2 = 2 \alpha_{1;n-1} \alpha_{2;n-1}, \\ 
\alpha_{1;1} = \alpha_{2;1} = 1.%
\end{array}%
\right.
\end{equation}
Then the corresponding indicator matrix is
\begin{equation*}
I_{F}= \begin{pmatrix}
1 & 0 & 1 \\ 
0 & 2 & 0
\end{pmatrix}.
\end{equation*}

Suppose $X$ is a TSFT over $\mathcal{A} = \{a_1, a_2, \ldots, a_k\}$. Let
$$
X_i = \{t \in X: t_{\epsilon} = a_i\} \quad \text{and} \quad \gamma_{i;n} = |B_n(X_i)|
$$
for $1 \leq i \leq k$. It follows immediately that $\{\gamma_{1;n}, \gamma_{2;n}, \ldots, \gamma_{k;n}\}_{n \in \mathbb{N}}$ forms an SNRE. Furthermore, every SNRE of degree $(d, k)$ can be realized via a TSFT (cf.~\cite{BC-N2017}). Let $F = \{F_1, \ldots, F_k\}$ be the representation of the SNRE of $X$. A subsystem called a \emph{reduced system of nonlinear recursive equations} of $F$ is defined as follows.

\begin{definition}\label{def:RSNRE}
Suppose $X$ is a TSFT, and $F$ is the SNRE corresponding to $X$ with indicator matrix $I_F$. We call $E$ a \emph{reduced system of nonlinear recurive equations} of $F$ if $E$ is an SNRE such that $I_{E}$ is a binary matrix satisfying the following conditions.
\begin{enumerate}[\bf (i)]
\item $I_{E} \leq I_{F}$;
\item there is exactly one nonzero entry in each row of $I_{E}$;
\item the initial condition of the sequence defined by $E$ is the same as the one defined by $F$.
\end{enumerate}
Herein, two matrices $A, B \in \mathbb{Z}^{m \times n}$ with $A \leq B$ means that $A(i, j) \leq B(i, j)$ for $1 \leq i \leq m, 1 \leq j \leq n$.
\end{definition}

Beyond defining the indicator matrix of an SNRE, a $k\times k$ nonnegative integral matrix $M_E$, called the \emph{weighted adjacency matrix}, of a reduced SNRE $E$, is defined as
\begin{equation}\label{eq:weighted-matrix}
M_E (i, j) = \max \{m: \alpha_{j;n-1}^m|E_i\}, \quad 1 \leq i, j \leq k.
\end{equation}
For example, consider the SNRE $F=\{F_i\}_{i=1}^{2}$ defined in \eqref{eq:example-SNRE} with indicator matrix
\begin{equation*}
I_{F}= \begin{pmatrix}
1 & 0 & 1 \\ 
0 & 2 & 0
\end{pmatrix}.
\end{equation*}
Then a reduced SNRE $E$ of $F$ with indicator matrix
\begin{equation*}
I_{E} = \begin{pmatrix}
1 & 0 & 0 \\ 
0 & 1 & 0
\end{pmatrix}
\end{equation*}
defines a sequence $\{\beta_{1;n}, \beta_{2;n}\}_{n \in \mathbb{N}}$ as follows.
\begin{equation*}
\left\{ 
\begin{array}{l}
\beta_{1;n} = E_1 = \beta_{1;n-1}^{2}, \\ 
\beta_{2;n} = E_2 = \beta_{1;n-1} \beta_{2;n-1}, \\ 
\beta_{1;1} = \beta_{2;1} = 1.
\end{array}%
\right.
\end{equation*}
Furthermore, the weighted adjacency matrix $M_E$ of $E$ is
$$
M_E = \begin{pmatrix}
2 & 0 \\
1 & 1
\end{pmatrix}.
$$

A symbol $a_i \in \mathcal{A}$ is called \emph{essential} if $\gamma_{i;n} \geq 2$ for some $n \in \mathbb{N}$; otherwise, we say that $a_i$ is \emph{inessential}. Suppose that, for a TSFT $X$ over $\mathcal{A}$, each symbol in $\mathcal{A}$ is essential.

\begin{theorem}[See \cite{BC-2017}]\label{thm:algorithm-entropy}
Let $X$ be a TSFT and let $F$ be the representation of the SNRE of $X$. If every symbol is essential, then 
\begin{equation}\label{eq:algorithm-entropy}
h(X) = \max \{\ln \rho _{M_E}: E \text{ is a reduced SNRE of } F\},
\end{equation}
where $M_E$ is the weighted adjacency matrix of $E$ and $\rho _{M_E}$ is the spectral radius of $M_E$.
\end{theorem}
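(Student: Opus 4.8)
The plan is to convert the doubly exponential count $|B_n(X)|$ into an ordinary exponential one and to read off its growth base from $M_E$. Since $B_n(X)=\bigsqcup_{i=1}^k B_n(X_i)$ we have $|B_n(X)|=\sum_{i=1}^k\gamma_{i;n}$, hence $\max_i\gamma_{i;n}\le |B_n(X)|\le k\max_i\gamma_{i;n}$. Writing $\beta_{i;n}=\ln\gamma_{i;n}$, $\beta^*_n=\max_i\beta_{i;n}$ and applying $\ln^2=\ln\circ\ln$ to \eqref{eq:entropy-tree-shift}, the factor $\ln k$ and the outer logarithm are absorbed once $\beta^*_n\to\infty$, so that $h(X)=\lim_n \frac{\ln\beta^*_n}{n}$. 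I would then linearize the SNRE of Definition \ref{def:SNRE} on the logarithmic scale: from $\beta_{i;n}=\ln\bigl(\sum_{\mathbf c}r_{i,\mathbf c}\exp(\sum_j c_j\beta_{j;n-1})\bigr)$ one gets the sandwich $\max_{\mathbf c:\,r_{i,\mathbf c}>0}\sum_j c_j\beta_{j;n-1}\le \beta_{i;n}\le c_0+\max_{\mathbf c:\,r_{i,\mathbf c}>0}\sum_j c_j\beta_{j;n-1}$, where $c_0=\max_i\ln\sum_{\mathbf c}r_{i,\mathbf c}$ is a uniform constant. The per-row maximizing monomials single out a reduced SNRE $E$, and the array of their exponents is exactly the weighted adjacency matrix $M_E$; thus the recursion is governed, up to $O(1)$, by the max-plus action of the matrices $M_E$.

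For the lower bound I would first use essentiality to produce a level $n_0$ and $\varepsilon>0$ with $\beta_{i;n_0}\ge\varepsilon$ for all $i$ simultaneously: each symbol admits at least one allowed children configuration (otherwise it could never be essential), so the level-truncation maps $B_n(X_i)\to B_{n-1}(X_i)$ are onto, whence $\gamma_{i;n}$ is nondecreasing and eventually $\ge 2$ for every $i$ at once. Writing $\mathbf b_n=(\beta_{1;n},\dots,\beta_{k;n})^{\mathsf T}$ and fixing any reduced SNRE $E$, retaining only its selected monomial (with coefficient $\ge 1$) gives $\mathbf b_n\ge M_E\mathbf b_{n-1}$ entrywise, and monotonicity of $M_E$ yields $\mathbf b_n\ge M_E^{\,n-n_0}\mathbf b_{n_0}\ge \varepsilon\,M_E^{\,n-n_0}\mathbf 1$. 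Every selected monomial has total degree $d$, so each row of $M_E$ sums to $d$, i.e. $M_E\mathbf 1=d\mathbf 1$; hence $\mathbf b_n\ge \varepsilon d^{\,n-n_0}\mathbf 1$ and $\frac{\ln\beta^*_n}{n}\ge \ln d-o(1)$. Since $\mathbf 1\gg 0$, the identity $M_E\mathbf 1=d\mathbf 1$ forces $\rho_{M_E}=d$ by Perron–Frobenius, so this already gives $h(X)\ge \max_E\ln\rho_{M_E}$.

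For the upper bound let $E_n$ denote the time-dependent reduced SNRE whose rows select the dominant monomials at level $n$, so that $\mathbf b_n\le c_0\mathbf 1+M_{E_n}\mathbf b_{n-1}$. Here lies the main obstacle: because $E_n$ varies with $n$, the vector $\mathbf b_n$ is a priori controlled only by an arbitrary product $M_{E_n}\cdots M_{E_{n_0+1}}$, and the growth rate of such products (their joint spectral radius) can in general exceed $\max_E\rho_{M_E}$. The mechanism that defeats this is precisely the structural fact established above, namely that all the $M_E$ share the strictly positive eigenvector $\mathbf 1$ with the same eigenvalue $d$: consequently $\|M_Ew\|_\infty\le d\,\|w\|_\infty$ for every $E$ and every $w\ge 0$, so $M_{E_n}\cdots M_{E_{n_0+1}}\mathbf 1=d^{\,n-n_0}\mathbf 1$ and the geometric accumulation of the $c_0\mathbf 1$ terms is likewise bounded by a multiple of $d^{\,n}\mathbf 1$. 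Hence $\beta^*_n\le Cd^{\,n}$ and $\frac{\ln\beta^*_n}{n}\le \ln d+o(1)$.

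Combining the two estimates shows that the limit defining $h(X)$ exists and equals $\ln d=\max_E\ln\rho_{M_E}$, which is \eqref{eq:algorithm-entropy}. I expect the only genuinely delicate point to be the upper bound's time-varying matrix products; everything else is bookkeeping on the logarithmic scale. It is worth isolating the shared-eigenvector observation $M_E\mathbf 1=d\mathbf 1$ as a lemma, since it simultaneously evaluates each $\rho_{M_E}$, drives the lower bound, and furnishes the uniform operator-norm control needed to tame the products in the upper bound.
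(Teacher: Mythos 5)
Your proof is correct, but note that the paper itself contains no proof of Theorem \ref{thm:algorithm-entropy}: it is imported verbatim from \cite{BC-2017}, and within this paper its only role is to deliver Proposition \ref{prop:essential-symbols-ln-d} (together with the cited bound $h(X)\le\ln d$). What you have done is reverse that logical order. Your central structural observation --- that Definitions \ref{def:SNRE} and \ref{def:RSNRE} together with \eqref{eq:weighted-matrix} force every row of every $M_E$ to sum to exactly $d$, hence $M_E\mathbf{1}=d\mathbf{1}$ and $\rho_{M_E}=d$ for \emph{every} reduced SNRE --- shows that under the all-essential hypothesis the right-hand side of \eqref{eq:algorithm-entropy} is identically $\ln d$, so the quoted statement collapses to Proposition \ref{prop:essential-symbols-ln-d}, which you then prove from scratch. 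The pieces all check out: the reduction $h(X)=\limsup_n \frac{1}{n}\ln\beta^*_n$ is legitimate once $\beta^*_n\to\infty$ (secured a posteriori by your lower bound); essentiality of every symbol does guarantee an allowed children configuration for each symbol, hence surjective truncations, monotone $\gamma_{i;n}$, and a common level $n_0$ with $\gamma_{i;n_0}\ge 2$; the entrywise bound $\mathbf{b}_n\ge M_E^{\,n-n_0}\mathbf{b}_{n_0}\ge\varepsilon d^{\,n-n_0}\mathbf{1}$ gives $h(X)\ge\ln d$; and the common eigenvector indeed tames the time-varying products $M_{E_n}\cdots M_{E_{n_0+1}}$ in the upper bound (though that bound could be had even more cheaply from $|B_n(X)|\le k^{(d^n-1)/(d-1)}$, which is exactly the $h(X)\le\ln d$ the paper cites from \cite{BC-N2017}). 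What your route buys is a short, self-contained, elementary proof of the quoted special case; what it deliberately cannot reach is the actual content of the algorithm in \cite{BC-2017}, which only becomes visible when inessential symbols are present and $M_E$ is replaced by the pruned matrix $M_E'$: there the row sums drop below $d$, the spectral radii of different reduced systems genuinely differ, and the time-varying-product difficulty you correctly identified as the crux can no longer be dismissed by a shared eigenvector. So your argument proves the theorem as stated, but the row-sum collapse should be flagged explicitly (as you do in your final remark), since it is what makes the ``maximum over reduced SNREs'' vacuous under this hypothesis.
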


Suppose that, for a TSFT $X$, there are some inessential symbols, say, $a_{p_1}, \ldots, a_{p_j}$. Ban and Chang demonstrate that Theorem \ref{thm:algorithm-entropy} still works provided, in \eqref{eq:algorithm-entropy}, $M_E$ is replaced by $M_E'$, where $M_E'$ is the matrix obtained by deleting all the rows and columns indexed by those inessential symbols. Readers are referred to \cite{BC-2017} for more details.

\begin{example}
Suppose that $d = 3, k = 4$. Let $X$ be a TSFT corresponds to the SNRE
\begin{equation*}
\left\{ 
\begin{array}{l}
\gamma_{1;n} = \gamma_{1;n-1} \gamma_{2;n-1} \gamma_{4;n-1} + \gamma_{4;n-1}^3, \\ 
\gamma_{2;n} = \gamma_{3;n-1} \gamma_{4;n-1}^2 + \gamma_{4;n-1}^3, \\ 
\gamma_{3;n} = \gamma_{1;n-1}^{2} \gamma_{2;n-1} + \gamma_{4;n}^3, \\ 
\gamma_{4;n} = \gamma_{4;n-1}^3, \\
\gamma_{i;1} = 1, 1 \leq i \leq 4.
\end{array}%
\right.
\end{equation*}
It is easily seen that $a_1, a_2, a_3$ are essential symbols and $a_4$ is inessential. The weighted adjacency matrix $M_E$ of the reduced SNRE $E$ which reaches the maximum in \eqref{eq:algorithm-entropy} is
$$
M_E = \begin{pmatrix}
1 & 1 & 0 & 1 \\ 
0 & 0 & 1 & 2 \\ 
2 & 1 & 0 & 0 \\ 
0 & 0 & 0 & 3
\end{pmatrix}.
$$
Since $a_4$ is inessential, we replace $M_E$ with
\begin{equation*}
M_E' = \begin{pmatrix}
1 & 1 & 0 \\ 
0 & 0 & 1 \\ 
2 & 1 & 0
\end{pmatrix}.
\end{equation*}
Theorem \ref{thm:algorithm-entropy} shows that the entropy of $X$ is $h(X) = \ln \rho_{M_E'} \approx \ln 1.839$, where $\rho_{M_E'}$ is the maximal root of $x^3 - x^2 - x - 1 = 0$.
\end{example}

\begin{proposition} \label{prop:essential-symbols-ln-d}
Suppose $X$ is a tree-shift of finite type and let $F$ be the representation of the SNRE of $X$. If every symbol is essential, then $h(X) = \ln d$.
\end{proposition}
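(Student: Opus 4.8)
The plan is to invoke Theorem~\ref{thm:algorithm-entropy} directly. Since the hypothesis is that every symbol is essential, that theorem applies without deleting any rows or columns, so $h(X) = \max\{\ln\rho_{M_E} : E \text{ is a reduced SNRE of } F\}$ with the genuine weighted adjacency matrices $M_E$. The whole problem therefore reduces to a single uniform computation: showing that $\rho_{M_E} = d$ for \emph{every} reduced SNRE $E$ of $F$. First I would note that the family of reduced SNREs is nonempty, for essentiality of $a_i$ forces $\gamma_{i;n}\ge 2$ for some $n$, which rules out $F_i$ being the empty sum; hence each row of $I_F$ carries a nonzero entry and one may select one monomial per row to build a reduced $E$.

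The key structural observation is that the defining recursion is homogeneous of degree $d$. By Definition~\ref{def:SNRE} every monomial appearing in any $F_i$ satisfies $c_1 + \cdots + c_k = d$. A reduced SNRE picks, for each $i$, exactly one such monomial $\prod_j \beta_{j;n-1}^{c_j}$ with coefficient $1$, and by \eqref{eq:weighted-matrix} the $i$-th row of $M_E$ is precisely the exponent vector $(c_1,\ldots,c_k)$ of that monomial. Consequently every row of $M_E$ has entries summing to $d$, i.e.\ $M_E \mathbf{1} = d\,\mathbf{1}$, where $\mathbf{1}$ is the all-ones vector. Since $M_E$ is a nonnegative integral matrix, its spectral radius is pinned between its minimal and maximal row sums; as all row sums equal $d$, this forces $\rho_{M_E} = d$. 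Taking $\ln$ and then the maximum over the nonempty family of reduced SNREs yields $h(X) = \ln d$, as claimed.

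As an independent confirmation—and to isolate where essentiality is really used—I would also run an elementary two-sided estimate. Writing $m_n = \min_{1\le i \le k}\gamma_{i;n}$, homogeneity gives $\gamma_{i;n+1} \ge \prod_j \gamma_{j;n}^{c_j} \ge m_n^{\,d}$ for any monomial of $F_i$, hence $m_{n+1}\ge m_n^{\,d}$. Monotonicity of each $\gamma_{i;n}$, which follows inductively from the nonnegativity of the coefficients together with $\gamma_{i;1}=1$, combined with essentiality yields some $N$ with $m_N\ge 2$, so $m_n \ge 2^{\,d^{\,n-N}}$ and therefore $\ln^2|B_n(X)| \ge \ln^2 m_n \ge (n-N)\ln d + \ln\ln 2$, giving $h(X)\ge \ln d$. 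The matching upper bound is automatic for any tree-shift, since an $n$-block is supported on $\Delta_{n-1}$ with $|\Delta_{n-1}| = (d^{\,n}-1)/(d-1)$ nodes, so $|B_n(X)| \le k^{|\Delta_{n-1}|}$ and $\limsup_n \ln^2|B_n(X)|/n \le \ln d$. The main obstacle is not any single hard estimate but rather the clean identification of the degree-$d$ homogeneity as the structural feature that forces constant row sums (equivalently, that $\mathbf{1}$ is an eigenvector); once that is in hand, both the spectral argument and the elementary bound fall out, and the only technical care needed is the standard Perron--Frobenius row-sum sandwich for possibly reducible nonnegative matrices.
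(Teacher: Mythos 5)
Your proof is correct and takes essentially the same route as the paper's: both invoke Theorem~\ref{thm:algorithm-entropy} and observe that the degree-$d$ homogeneity of the SNRE forces every row of the weighted adjacency matrix $M_E$ to sum to $d$, so that $\rho_{M_E}=d$ and $h(X)=\ln d$. Your supplementary two-sided elementary estimate simply re-derives the upper bound $h(X)\le \ln d$ that the paper cites from \cite{BC-N2017} (together with a direct lower bound), so it is a valid self-contained addition but not a different method.
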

\begin{proof}
It suffices to show that there exists a reduced SNRE $E$ of $F$ such that $h(E) = \ln d$ since $h(X) \leq \ln d$ (cf.~\cite{BC-N2017}). Let $E$ be a reduced SNRE of $F$. Then the weighted adjacency matrix $M_E$ satisfies $\sum\limits_{j=1}^k M_E(i, j) = d$ for $1 \leq i \leq d$. Since every symbol is essential, Theorem \ref{thm:algorithm-entropy} infers that $h(X) \geq \ln \rho_{M_E}$, where $\rho_{M_E}$ is the spectral radius of $M_E$. This completes the proof since $\rho_{M_E} = d$.
\end{proof}

Proposition \ref{prop:essential-symbols-ln-d} infers the rigidity of entropy since it is a constant ($\ln d$) whenever there is no inessential symbol. Let
\begin{equation}\label{eq:matrix-row-sum-less-than-equal-to-d}
D = \{M \in \mathcal{M}_{\ell \times \ell}(\mathbb{Z}^+): \sum_{q=1}^{\ell} M(p, q) \leq d \text{ for } 1 \leq p \leq \ell, \ell \leq k\}
\end{equation}
consist of nonnegative integral matrices whose dimension is less than or equal to $k$, and the summation of each row is less than or equal to $d$. Theorem \ref{thm:entropy-set-2symbol} illustrates a complete characterization of the entropy of TSFTs.

\begin{theorem}\label{thm:entropy-set-2symbol}
Let $H = \{h(X): X \text{ is a TSFT}\}$ be the entropy spectrum of TSFTs and $D$ is defined as in \eqref{eq:matrix-row-sum-less-than-equal-to-d}. Then
\begin{equation}\label{eq:entropy-spectrum-TSFT}
H = \{\ln \rho: \rho \text{ is the spectral radius of } M \in D\}.
\end{equation}
More specifically, $H = \{\ln \lambda: 1 \leq \lambda \leq d\}$ if $k = 2$.
\end{theorem}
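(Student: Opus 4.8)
The plan is to prove the set identity \eqref{eq:entropy-spectrum-TSFT} by establishing the two inclusions separately, using the entropy algorithm of Theorem~\ref{thm:algorithm-entropy} together with the realization principle that every SNRE of degree $(d,k)$ arises from some TSFT (cf.~\cite{BC-N2017}); the case $k=2$ is then settled by enumerating the possible reduced systems. Throughout, $d$ is fixed and the alphabet size is allowed to vary, so that $D$ records all admissible weighted adjacency matrices.

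For $H\subseteq\{\ln\rho:\rho\text{ is the spectral radius of some }M\in D\}$, take an arbitrary TSFT $X$ and let $E$ attain the maximum in \eqref{eq:algorithm-entropy}, so $h(X)=\ln\rho_{M_{E}'}$. Because $E$ is reduced, each $E_i$ is a single monomial $\prod_j\alpha_{j;n-1}^{c_j}$ with $\sum_j c_j=d$; hence every row of $M_E$ sums to exactly $d$. Forming $M_E'$ deletes the rows and columns indexed by the inessential symbols, which only removes nonnegative entries, so each surviving row sums to at most $d$ and the dimension is at most $k$. Thus $M_E'\in D$ and $h(X)$ belongs to the right-hand set.

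For the reverse inclusion I would realize a prescribed $M\in D$ of dimension $\ell$, with row sums $s_i=\sum_j M(i,j)\le d$, as the entropy-maximizing matrix of an explicit TSFT. Adjoin one auxiliary symbol $a_{\ell+1}$ and define, for $1\le i\le\ell$,
\[
F_i=\Bigl(\prod_{j=1}^{\ell}\alpha_{j;n-1}^{M(i,j)}\Bigr)\alpha_{\ell+1;n-1}^{d-s_i}+\alpha_{\ell+1;n-1}^{d},\qquad F_{\ell+1}=\alpha_{\ell+1;n-1}^{d},
\]
with all initial values $1$, and let $X$ realize this SNRE. Then $\gamma_{\ell+1;n}\equiv1$, so $a_{\ell+1}$ is inessential, while the summand $\alpha_{\ell+1;n-1}^{d}$ equals $1$ and forces $\gamma_{i;2}=2$, so $a_1,\dots,a_\ell$ are essential. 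After deleting the inessential row and column, any reduced SNRE chooses in each row either the first monomial (reproducing row $i$ of $M$) or the filler (contributing a zero row); every resulting matrix is therefore entrywise $\le M$, and by monotonicity of the spectral radius for nonnegative matrices the maximum over reduced systems is exactly $\rho_M$. Hence $h(X)=\ln\rho_M$.

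Finally, for $k=2$ I would argue directly, since no third symbol is available to serve as a filler for a two-dimensional reduced system. If both symbols are essential there is no deletion, so $M_E'=M_E$ has constant row sum $d$ and $\rho_{M_E'}=d$, giving $h(X)=\ln d$ (this is Proposition~\ref{prop:essential-symbols-ln-d}). If exactly one symbol is essential, $M_E'$ is the $1\times1$ matrix $(c)$ with $0\le c\le d$, so $\rho_{M_E'}=c$; conversely $F_1=\alpha_{1;n-1}^{c}\alpha_{2;n-1}^{d-c}+\alpha_{2;n-1}^{d}$ and $F_2=\alpha_{2;n-1}^{d}$ realize each integer $c\in\{1,\dots,d\}$, the value $c=1$ giving $h=0$. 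Consequently the attained spectral radii are precisely the integers $1,\dots,d$, which is the assertion $H=\{\ln\lambda:1\le\lambda\le d\}$. I expect the main obstacle to be the reverse inclusion: one must make the chosen symbols genuinely essential without letting the auxiliary terms spawn a reduced system of larger spectral radius. The filler monomial $\alpha_{\ell+1;n-1}^{d}$ is exactly what threads this needle --- evaluating to $1$ it guarantees essentiality, yet after deletion it contributes only zero rows and so cannot push the maximal spectral radius above $\rho_M$.
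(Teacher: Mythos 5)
Your proposal is correct and follows the same skeleton as the paper's proof (Theorem~\ref{thm:algorithm-entropy} for the forward inclusion, an explicit SNRE with an inessential ``filler'' symbol for the reverse one), but it is genuinely more complete at the two places where the paper is thin. For the reverse inclusion the paper only treats $k=2$ explicitly --- your $F_1=\alpha_{1;n-1}^{c}\alpha_{2;n-1}^{d-c}+\alpha_{2;n-1}^{d}$, $F_2=\alpha_{2;n-1}^{d}$ is verbatim its construction --- and dismisses the general case as ``similar, thus omitted''; your filler monomial $\alpha_{\ell+1;n-1}^{d}$ together with monotonicity of the spectral radius on nonnegative matrices is exactly the missing argument, since it guarantees essentiality of $a_1,\dots,a_\ell$ while ensuring every reduced system's pruned matrix is entrywise $\le M$, so the maximum in \eqref{eq:algorithm-entropy} is attained at $\rho_M$. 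Your forward inclusion is also more robust than the paper's: the paper fixes the reduced system whose rows maximize the total degree in the essential symbols and asserts that this particular system attains $h(X)$, which is questionable (rowwise degree maximization need not maximize the spectral radius), whereas you take whatever $E$ attains the maximum and merely observe $M_E'\in D$ --- all the inclusion requires. One caveat deserves flagging: your construction consumes $\ell+1$ symbols, so with the alphabet size $k$ fixed it only realizes matrices of dimension $\ell\le k-1$. This restriction is in fact unavoidable: for $\ell=k$ no filler is available, and e.g.\ for $k=2$ the matrix $\left(\begin{smallmatrix}1&1\\1&0\end{smallmatrix}\right)\in D$ has irrational spectral radius, which would contradict the theorem's own clause $H=\{\ln\lambda:1\le\lambda\le d\}$ (with $\lambda$ integral, as both your enumeration and the paper's $k=2$ analysis show). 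So your opening convention that the alphabet size varies --- equivalently, reading $D$ as matrices of dimension $\le k-1$ together with the constant-row-sum case --- is the only interpretation under which \eqref{eq:entropy-spectrum-TSFT} is literally true, and under it your proof is sound; you might additionally note the degenerate case of nilpotent $M$ (where $\rho_M=0$ and $\ln\rho_M$ should be read as entropy $0$), which neither you nor the paper addresses but which affects both proofs equally.
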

\begin{proof}
We start with demonstrating the case where $k = 2$ to clarify our idea. If $a_1$ and $a_2$ are both essential symbols, Proposition \ref{prop:essential-symbols-ln-d} indicates that $h(X) = \ln d$. On the other hand, it is easily seen that $h(X) = 0$ provided $a_1$ and $a_2$ are both inessential. It remains to consider the case where exactly one symbol is essential.

Without loss of generality, we assume that $a_1$ is the essential symbol. It follows immediately from $a_2$ being inessential that, if $u \in \mathcal{F}$ such that $u_{\epsilon} = a_2$, $u_x = a_1$ for some $x = 1, 2, \ldots, d$. The SNRE of $X$ is then as follows.
$$
\left\{ 
\begin{aligned}
\gamma_{1;n} &= \sum_{c=0}^d \ell_c \gamma_{1;n-1}^c \gamma_{2;n-1}^{d-c}, \\ 
\gamma_{2;n} &= \gamma_{2;n-1}^d, n \geq 2, \\
\gamma_{i;1} &= 1, 1 \leq i \leq 2.
\end{aligned}
\right.
$$
Since $a_1$ is essential, there exists $c < d$ such that $\ell_c > 0$. Let $\overline{c} = \max\{c: \ell_c > 0\}$ and let $E$ be the representation of the following reduced SNRE.
$$
\left\{ 
\begin{aligned}
\beta_{1;n} &= \ell_{\overline{c}} \beta_{1;n-1}^{\overline{c}} \beta_{2;n-1}^{d-\overline{c}}, \\ 
\beta_{2;n} &= \beta_{2;n-1}^d.
\end{aligned}
\right.
$$
It follows from
$M_E = \begin{pmatrix}
\overline{c} & d - \overline{c} \\
0 & d
\end{pmatrix}$
and $M_E' = (\overline{c})$ that $h(X) = \ln \overline{c}$. This shows that $H \subseteq \{\ln \lambda: 1 \leq \lambda \leq d\}$.

Conversely, for $1 \leq c \leq d$, let $X$ be a TSFT correspond to the SNRE
$$
\left\{ 
\begin{aligned}
\gamma_{1;n} &= \gamma_{1;n-1}^c \gamma_{2;n-1}^{d-c} + \gamma_{2;n-1}^d, \\ 
\gamma_{2;n} &= \gamma_{2;n-1}^d, \\
\gamma_{i;1} &= 1, 1 \leq i \leq 2.
\end{aligned}
\right.
$$
Then $h(X) = \ln c$. The proof of $H = \{\ln \lambda: 1 \leq \lambda \leq d\}$ is thus complete.

Generally, the SNRE of $X$ is seen as
$$
\left\{ 
\begin{aligned}
&\gamma_{i;n} = \sum_{\mathbf{c}} \ell_{\mathbf{c}} \prod_{j=1}^k \gamma_{j;n-1}^{c_{i,j}}, \mathbf{c} = (c_{i,j}) \text{ satisfies } \sum_{j=1}^k c_{i, j} = d, n \geq 2, \\
&\gamma_{i;1} = 1, 1 \leq i \leq k.
\end{aligned}
\right.
$$
It suffices to consider the case where there is inessential symbol. Without loss of generality, we may assume that $a_1, \ldots, a_{\ell}$ are essential and $a_{\ell + 1}, \ldots, a_k$ are inessential for some $1 \leq \ell \leq k-1$. For $1 \leq i \leq k$, let $\overline{\mathbf{c}}_i = (c_{i, 1}, \ldots, c_{i, k})$ satisfy $\sum_{j=1}^{\ell} c_{i,j} \geq \sum_{j=1}^{\ell} c_{i,j}'$ for all $\mathbf{c}' = (c_{i, 1}', \ldots, c_{i, k}')$, and let $E$ be the SNRE
$$
\left\{ 
\begin{aligned}
&\beta_{i;n} = \ell_{\mathbf{c}_i} \prod_{j=1}^k \gamma_{j;n-1}^{c_{i,j}}, n \geq 2, \\
&\beta_{i;1} = 1, 1 \leq i \leq k.
\end{aligned}
\right.
$$
It is seen that $h(X) = \ln \rho$, where $\rho$ is the spectral radius of $M = (c_{i, j})_{1 \leq i, j \leq \ell}$. This elaborates
$$
H \subseteq \{\ln \rho: \rho \text{ is the spectral radius of } M \in D\}.
$$
The demonstration of $H \supseteq \{\ln \rho: \rho \text{ is the spectral radius of } M \in D\}$ is similar to the discussion above, thus it is omitted. This completes the proof.
\end{proof}

Suppose that $X = \mathsf{X}_{\mathcal{F}}$ is a TSFT over $\mathcal{A} = \{a_1, \ldots, a_k\}$. Recall that $\mathcal{F}$ consists of $2$-blocks. Let $\mathcal{B} = \mathcal{A}^{\Delta_1} \setminus \mathcal{F}$. For $\mathcal{A}' \subseteq \mathcal{A}$, set
$$
\mathcal{B}|_{\mathcal{A}'} = \{u \in \mathcal{B}: u_x \in \mathcal{A}' \text{ for } x \in \Delta_1\},
$$
and let $X|_{\mathcal{A}'}$ denote the subshift generated by $\mathcal{B}|_{\mathcal{A}'}$. We say that $\mathcal{A}'$ is essential if every symbol $a \in \mathcal{A}'$ is essential. This section ends with Corollary \ref{cor:loop-imply-ln-d}, which comes immediately from the proof of Theorem \ref{thm:entropy-set-2symbol} and is useful in the investigation of the entropy minimality problem of neural networks on Cayley trees.

\begin{corollary}\label{cor:loop-imply-ln-d}
Suppose that $X = \mathsf{X}_{\mathcal{F}}$ is a TSFT over $\mathcal{A} = \{a_1, \ldots, a_k\}$. Then $h(X) = \ln d$ if and only if $X|_{\mathcal{A}'}$ is nontrivial for some essential set $\mathcal{A}' \subseteq \mathcal{A}$. More specifically, when $k=2$, $h(X) = \ln d$ if and only if $\mathcal{A}$ is essential or the $2$-block $u \in \mathcal{B}$ with $u_x = a$ for $x \in \Delta_1$ and $a$ is essential.
\end{corollary}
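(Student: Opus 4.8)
The plan is to read the corollary off from the identity $h(X)=\ln\rho$ established in the proof of Theorem \ref{thm:entropy-set-2symbol}, where $\rho$ is the spectral radius of the $\ell\times\ell$ matrix $M=(c_{i,j})_{1\le i,j\le\ell}$ built from the essential symbols $a_1,\dots,a_\ell$ of $X$, with $\overline{\mathbf c}_i$ the allowed configuration at $a_i$ maximizing the number of essential children. Every row sum of $M$ is at most $d$, so $\rho\le d$, and the entire statement reduces to characterizing when $\rho=d$, i.e. when the spectral radius attains the maximal possible row sum.

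First I would isolate the linear-algebra lemma: a nonnegative integral matrix $M$ with all row sums $\le d$ satisfies $\rho=d$ if and only if there is a nonempty index set $S$ for which the principal submatrix $M|_S$ has every row sum equal to $d$. For the forward direction, take a nonnegative Perron eigenvector $v$ for $\rho=d$, normalize so that $\max_i v_i=1$, and set $S=\{i:v_i=1\}$; for $i\in S$ the chain $d=dv_i=\sum_j M(i,j)v_j\le\sum_j M(i,j)\le d$ forces both $\sum_j M(i,j)=d$ and $M(i,j)>0\Rightarrow j\in S$, so $M|_S$ has constant row sum $d$. For the converse, the indicator vector $\mathbf 1_S$ satisfies $M\mathbf 1_S\ge d\,\mathbf 1_S$, whence $\rho\ge d$ and thus $\rho=d$. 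This is the technical heart of the argument.

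Next I would translate the two sides of the lemma into the language of blocks and of the subshifts $X|_{\mathcal A'}$. Writing $\mathcal A'=\{a_j:j\in S\}$, the statement that the $i$-th row of $M|_S$ sums to $d$ means precisely that the essential-maximizing configuration $\overline{\mathbf c}_i$ places all $d$ children of $a_i$ inside $\mathcal A'$, equivalently that $\mathcal B|_{\mathcal A'}$ contains a $2$-block rooted at $a_i$. Hence $\rho=d$ produces a set $\mathcal A'$ of essential symbols in which every symbol admits an allowed $2$-block with all children in $\mathcal A'$; iterating these blocks builds a tree of $X|_{\mathcal A'}$, so $X|_{\mathcal A'}$ is nontrivial. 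Conversely, if $X|_{\mathcal A'}$ is nontrivial for an essential set $\mathcal A'$, I would pass to the still-essential subset $\mathcal A''\subseteq\mathcal A'$ of symbols occurring in a tree of $X|_{\mathcal A'}$: each such symbol has its children (in that tree) again in $\mathcal A''$, so choosing these configurations yields a reduced SNRE of $F$ whose weighted adjacency matrix has constant row sum $d$ on the essential, hence undeleted, indices $\mathcal A''$. By the converse half of the lemma its spectral radius is $\ge d$, so Theorem \ref{thm:algorithm-entropy} together with $h(X)\le\ln d$ gives $h(X)=\ln d$. This settles both implications.

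Finally, for $k=2$ I would simply enumerate the possible witnessing sets. Either $S=\{1,2\}$, which forces both symbols essential, i.e. $\mathcal A'=\mathcal A$ essential; or $S=\{i\}$ is a singleton, in which case the row-sum condition says that the constant $2$-block $u$ with $u_x=a_i$ for all $x\in\Delta_1$ lies in $\mathcal B$ and $a_i$ is essential. These are exactly the two alternatives in the statement. The step I expect to be the main obstacle is the bookkeeping in the translation above, namely reconciling the globally essential-maximizing matrix $M$ used to compute $h(X)$ with a witnessing set $\mathcal A'$ on which all children are required to stay inside $\mathcal A'$, rather than the linear algebra itself.
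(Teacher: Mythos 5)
Your proposal is correct, but it takes a genuinely different route from the paper. The paper derives the corollary as an immediate byproduct of its proof of Theorem \ref{thm:entropy-set-2symbol}: the forward implication (nontrivial $X|_{\mathcal{A}'}$ implies $h(X)=\ln d$) is declared obvious, and for the converse the paper treats only $k=2$ in detail --- there, $h(X)=\ln \overline{c}$ with $\overline{c}$ the maximal number of essential children, so $h(X)=\ln d$ forces the diagonal system $\gamma_{i;n}=\gamma_{i;n-1}^d$ to be a reduced SNRE, i.e.\ the constant $2$-block on the essential symbol is allowed --- with the general $k$ waved off as ``analogous.'' You instead isolate a clean Perron--Frobenius lemma (a nonnegative integral matrix with row sums $\leq d$ has $\rho=d$ iff some principal submatrix has constant row sum $d$, proved via the level set $\{i: v_i=1\}$ of a normalized Perron vector in one direction and Collatz--Wielandt subinvariance with $\mathbf{1}_S$ in the other) and then translate principal submatrices into restricted subshifts $X|_{\mathcal{A}'}$. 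This buys a uniform treatment of general $k$, supplies the ``obvious'' direction with an explicit reduced-SNRE construction (including the careful passage from $\mathcal{A}'$ to the subset $\mathcal{A}''$ of symbols actually occurring in a witnessing tree), and your singleton-versus-full enumeration of $S$ recovers the $k=2$ dichotomy exactly. One caution: in the forward direction you anchor to the specific row-wise-maximizing matrix $M=(c_{i,j})$ from the proof of Theorem \ref{thm:entropy-set-2symbol}, but greedy row-wise maximization of essential mass need not maximize the spectral radius when ties are broken badly (e.g.\ for $d=2$, choosing the monomial $(0,2)$ over an available $(2,0)$ in one row can drop $\rho$ from $2$ to $\sqrt{2}$), so the identity $h(X)=\ln\rho(M)$ for that particular $M$ is shakier than the paper suggests; your argument is immune to this if you simply apply your lemma to a reduced SNRE attaining the maximum in Theorem \ref{thm:algorithm-entropy} (with inessential rows and columns deleted), which exists whenever $h(X)=\ln d$ and is all your translation actually uses --- as your converse direction, which invokes Theorem \ref{thm:algorithm-entropy} directly, already demonstrates.
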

\begin{proof}
Obviously, $X|_{\mathcal{A}'}$ being nontrivial for some essential set $\mathcal{A}' \subseteq \mathcal{A}$ infers that $h(X) = \ln d$. For the converse direction, it suffices to show the case where $k = 2$, the general cases can be derived analogously. If $a_1$ and $a_2$ are inessential, then $h(X) = 0$, which is a contradiction. Without loss of generality, we may assume that $a_1$ is essential.

If $a_2$ is essential, Proposition \ref{prop:essential-symbols-ln-d} demonstrates that $h(X) = \ln 2$. Otherwise, $h(X) = \ln d$ infers that the system
$$
\left\{ 
\begin{aligned}
\gamma_{1;n} &= \gamma_{1;n-1}^d, \\ 
\gamma_{2;n} &= \gamma_{2;n-1}^d,
\end{aligned}
\right.
$$
must be a reduced SNRE of the original system. This derives the desired result.
\end{proof}

\section{Neural Networks on Cayley Trees}

The overwhelming majority of models of neural networks are defined on $\mathbb{Z}^n$ lattice. While it is known that the characteristic shape of neurons is tree \cite{GKC-PCB2009}, this section considers neural networks defined on Cayley trees. A \emph{neural network on Cayley tree} (CTNN) is represented as
\begin{equation}\label{eq:cnn-tree}
\dfrac{d}{dt} x_w(t) = - x_w(t) + z + \sum_{v \in \mathcal{N}} a_v f(x_{wv}(t)), \quad w \in \Sigma^*,
\end{equation}
for some finite set $\mathcal{N} \subset \Sigma^*$ known as the neighborhood, $v \in \mathcal{N}$, and $t\geq 0$. Herein, $x_w(t) \in \mathbb{R}$ represents the internal status of neuron at $w$; the map $f(s)$ is called the \emph{output function} or \emph{activation function}, and $z \in \mathbb{R}$ is called the \emph{threshold}. The weighted parameters $A = (a_v)_{v \in \mathcal{N}}, a_v \in \mathbb{R},$ is called the \emph{feedback template}, and  Figure \ref{fig:TreeCNN} shows the connection of a binary CTNN with the nearest neighborhood. Equation \eqref{eq:cnn-tree} is derived by adopting Hopfield's neural network (\cite{Hopfield-PNASU1982}) on the Cayley tree. Normatov and Rozikov \cite{NR-MN2006} show that harmonic functions on Cayley trees, which is a discrete time version of \eqref{eq:cnn-tree}, are periodic with respect to normal subgroups of finite index. The present paper investigates the complexity of output solutions with respect to the output function
\begin{equation}\label{eq:piecewise-linear}
f(s)=\dfrac{1}{2}(|s+1|-|s-1|)  
\end{equation}
which is proposed by Chua and Yang \cite{CY-ITCS1988} and is widely applied to many disciplines such as signal propagation between neurons, pattern recognition, and self-organization.

\begin{figure}[tbp]
\begin{center}
\includegraphics[scale=0.8,page=3]{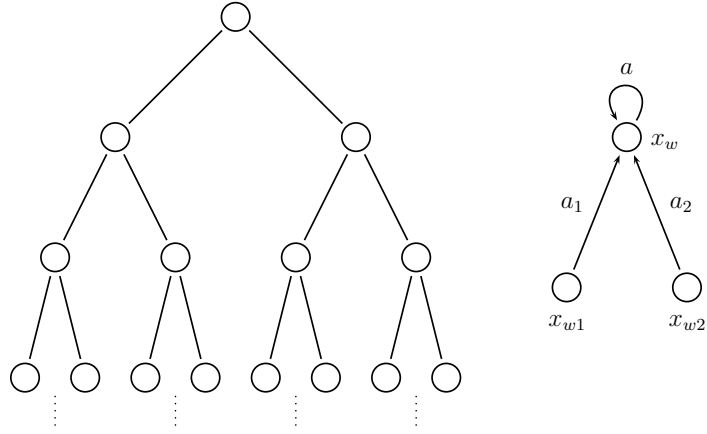}
\end{center}
\caption{A neural network with the nearest neighborhood defined on binary trees. In this case, the neighborhood $\mathcal{N} = \{\epsilon, 0, 1\}$ and $a = a_{\epsilon}$.}
\label{fig:TreeCNN}
\end{figure}

A \emph{mosaic solution} $x = (x_w)_{w \in \Sigma^*}$ of \eqref{eq:cnn-tree} is an equilibrium solution which satisfies $|x_w| > 1$ for all $w \in \Sigma^*$; its corresponding pattern $y = (y_w)_{w \in \Sigma^*} = (f(x_w))_{w \in \Sigma^*}$ is called a \emph{mosaic output pattern}. Since the output function \eqref{eq:piecewise-linear} is piecewise linear with $f(s)=1$ (resp.~$-1$) if $s \geq 1$ (resp.~$s \leq -1$), the output of a mosaic solution $x = (x_w)_{w \in \Sigma^*}$ must be an element in $\left\{ -1,+1\right\}^{\Sigma^*}$, which is why we call it a \emph{pattern}. Given a CTNN, we refer to $\mathbf{Y}$ as the output solution space; namely,
\begin{equation*}
\mathbf{Y} = \left\{ (y_w)_{w \in \Sigma^*}: y_w = f(x_w) \text{ and } (x_w)_{w \in \Sigma^*} \text{ is a mosaic solution of } \eqref{eq:cnn-tree} \right\} .
\end{equation*}

\subsection{Learning problem of neural networks on Cayley trees}

Learning problems (also called the inverse problems) are some of the most investigated topics in a variety of disciplines. From a mathematical point of view, determining whether a given collection of output patterns can be exhibited by a CTNN is essential for the study of learning problems. This section reveals the necessary and sufficient conditions for the capability of exhibiting the output patterns of CTNNs. The discussion is similar to the investigation in \cite{BC-NN2015,BCLL-JDE2009,Chang-ITNNLS2015}, thus we only sketch the key procedures of the learning problems of CTNNs with the nearest neighborhood, namely, $\mathcal{N} = \Sigma \bigcup \{e\}$, for the compactness and self-containedness of this paper.

A CTNN with the nearest neighborhood is realized as
\begin{equation}\label{eq:cnn-tree-nearest-nbd}
\dfrac{d}{dt} x_w(t) = - x_w(t) + z + a f(x_{w}(t)) + \sum_{i=1}^d a_i f(x_{wi}(t)),
\end{equation}
where $a, a_1, \ldots, a_d \in \mathbb{R}$ and $w \in \Sigma^*$. Considering the mosaic solution $x = (x_w)_{w \in \Sigma^*}$, the necessary and sufficient conditions for $y_w = f(x_w) = 1$ is
\begin{equation}\label{eq:cnn-state+}
a - 1 + z > - \sum_{i=1}^d a_i y_{wi}.
\end{equation}
Similarly, the necessary and sufficient conditions for $y_w = f(x_w) = -1$ is
\begin{equation}\label{eq:cnn-state-}
a - 1 - z > \sum_{i=1}^d a_i y_{wi}.
\end{equation}

Let
$$
V^n = \{ v \in \mathbb{R}^n : v = (v_1, \ldots, v_n), \text{ and } |v_i| = 1, 1 \leq i \leq n \}.
$$
Let $\alpha = (a_1, \ldots, a_d)$ represent the feedback template without the self-feedback parameter $a$. The basic set of admissible local patterns with the ``$+$" state in the parent neuron is denoted as
\begin{equation}
\widetilde{\mathcal{B}}_+( A, z) = \{v \in V^d: a - 1 + z > -\alpha \cdot v \},
\end{equation}
where ``$\cdot$" is the inner product in Euclidean space. Similarly, the basic set of admissible local patterns with the ``$-$" state in the parent neuron is denoted as
\begin{equation}
\widetilde{\mathcal{B}}_-( A, z) = \{v \in V^d: a - 1 - z > \alpha \cdot v \}.
\end{equation}
Furthermore, the admissible local patterns induced by $(A, z)$ can be denoted by
\begin{equation}
\mathcal{B}(A, z) = \mathcal{B}_+( A, z) \bigcup \mathcal{B}_-( A, z),
\end{equation}
where
\begin{align*}
\mathcal{B}_+( A, z) &= \{v: v_{\epsilon} = 1 \text{ and } (v_1, \ldots, v_d) \in \widetilde{\mathcal{B}}_+( A, z)\}, \\
\mathcal{B}_-( A, z) &= \{v: v_{\epsilon} = -1 \text{ and } (v_1, \ldots, v_d) \in \widetilde{\mathcal{B}}_-( A, z)\}.
\end{align*}
Note that $\mathcal{B}(A, z)$ consists of two-blocks over $\mathcal{A} = \{1, -1\}$. For simplicity, we omit the parameters $(A, z)$ and refer to $\mathcal{B}$ as $\mathcal{B}(A, z)$.

Suppose $U$ is a subset of $V^n$, where $n \geq 2 \in \mathbb{N}$. Let $U^c = V^n \setminus U$. We say that $U$ satisfies the \emph{linear separation property} if there exists a hyperplane $H$ that separates $U$ and $U^c$. More precisely, $U$ satisfies the separation property if and only if there exists a linear functional $g(z_1, z_2, \ldots, z_n) = c_1 z_1 + c_2 z_2 + \cdots + c_n z_n$ such that
$$
g(v) > 0 \quad \text{for} \quad v \in U \quad \text{and} \quad g(v) < 0 \quad \text{for} \quad v \in U^c.
$$
Figure \ref{fig:separation} interprets those $U \subset V^2$ satisfying the linear separation property.

\begin{figure}[tbp]
\begin{center}
\includegraphics[scale=0.8,page=4]{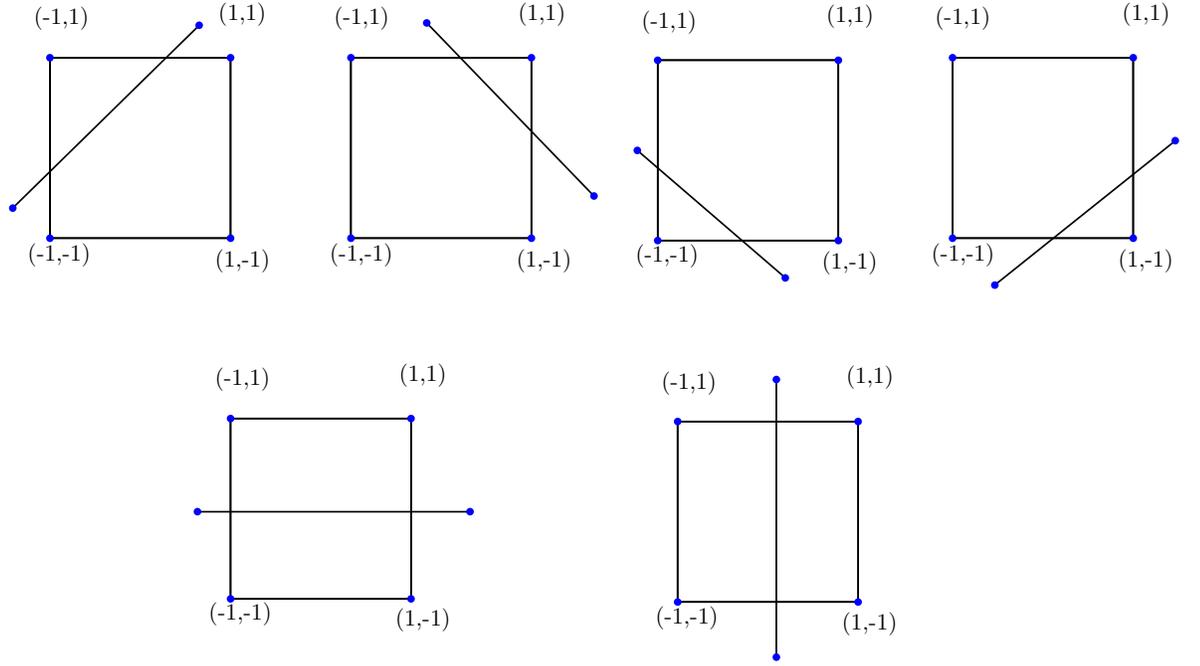}
\end{center}
\caption{Suppose $U \subseteq V^2 = \{-1, 1\}^2$ presents a set of allowable local patterns. Linear separation property infers that $U$ and $V^2 \setminus U$, geometrically, must be separated by a straight line. Hence, there are only $12$ choices of $U$ when $d = 2$.}
\label{fig:separation}
\end{figure}

Proposition \ref{prop:separation-property} elucidates the necessary and sufficient condition for the learning problems of CTNNs; such a property holds for arbitrary neighborhood $\mathcal{N}$ provided $\mathcal{N}$ is prefix-closed. The proof of Proposition \ref{prop:separation-property} is similar to the discussion in \cite{BC-NN2015}, thus it is omitted.

\begin{proposition}\label{prop:separation-property}
A collection of patterns $\mathcal{B} = \mathcal{B}_+ \bigcup \mathcal{B}_-$ can be realized in \eqref{eq:cnn-tree-nearest-nbd} if and only if either of the following conditions is satisfied:
\begin{enumerate}[\bf ({Inv}1)]
\item $-\mathcal{B}_+ \subseteq \mathcal{B}_-$ and $\mathcal{B}_-$ satisfies linear separation property;
\item $-\mathcal{B}_- \subseteq \mathcal{B}_+$ and $\mathcal{B}_+$ satisfies linear separation property.
\end{enumerate}
\end{proposition}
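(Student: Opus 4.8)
The plan is to read off the structure directly from the mosaic--equilibrium inequalities \eqref{eq:cnn-state+}--\eqref{eq:cnn-state-}. Writing $\alpha = (a_1,\ldots,a_d)$, these say that a two-block $(v_\epsilon; v_1,\ldots,v_d)$ is admissible exactly when $v_\epsilon(\alpha\cdot v + z) > 1-a$; equivalently, realizing $\mathcal{B}$ means choosing $\alpha, a, z$ so that $\widetilde{\mathcal{B}}_+ = \{v : \alpha\cdot v > 1-a-z\}$ and $\widetilde{\mathcal{B}}_- = \{v : \alpha\cdot v < a-1-z\}$ hold \emph{simultaneously}. Thus the \emph{same} template $\alpha$ must present $\widetilde{\mathcal{B}}_+$ as a positive half-space and $\widetilde{\mathcal{B}}_-$ as a negative one, the two offsets $\theta_+ = 1-a-z$ and $\theta_- = a-1-z$ being recoverable freely from $a$ and $z$. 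So I would first record that the only genuine constraint on realizability is the existence of a common normal $\alpha$ cutting out both half-spaces, which is precisely what the separation property plus an inclusion is meant to encode.

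For necessity I would assume parameters realize $\mathcal{B}$ and split on the sign of $z$ (equivalently, on whether $\theta_+ + \theta_- \geq 0$ or $\leq 0$). If $z \leq 0$ then $\alpha\cdot v > \theta_+ \geq -\theta_-$ forces $\alpha\cdot(-v) < \theta_-$, that is $v \in \widetilde{\mathcal{B}}_+ \Rightarrow -v \in \widetilde{\mathcal{B}}_-$; reinstating the parent symbols, this is exactly $-\mathcal{B}_+ \subseteq \mathcal{B}_-$, while the hyperplane $\{\alpha\cdot v = \theta_-\}$ visibly separates $\widetilde{\mathcal{B}}_-$ from its complement, giving \textbf{(Inv1)}. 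The case $z \geq 0$ is the mirror image and yields \textbf{(Inv2)}. Since every real $z$ satisfies $z\leq 0$ or $z \geq 0$, one of the two conditions always holds, which establishes the forward direction.

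For the converse I would assume \textbf{(Inv1)}: the separation of $\mathcal{B}_-$ supplies a normal $\alpha$ and an offset with $\widetilde{\mathcal{B}}_- = \{\alpha\cdot v < \theta_-\}$, and then I would use the inclusion $-\mathcal{B}_+ \subseteq \mathcal{B}_-$ to fix $\theta_+$ (hence $a$ and $z$) so that this \emph{same} $\alpha$ also carves out $\widetilde{\mathcal{B}}_+ = \{\alpha\cdot v > \theta_+\}$, finally checking that the chosen parameters reproduce $\mathcal{B}$ exactly. The delicate point, and the main obstacle, is precisely this step: a priori the separating normal for $\mathcal{B}_-$ need not resolve $\mathcal{B}_+$ as well, because the two half-spaces must be cut by \emph{parallel} hyperplanes sharing one normal. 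The clean way to defeat it is the substitution $u_\epsilon = v_\epsilon$, $u_i = v_\epsilon v_i$, under which the admissibility inequality collapses to the single affine condition $z\,u_\epsilon + \alpha\cdot(u_1,\ldots,u_d) > 1-a$ on $\{-1,1\}^{d+1}$. Realizability then becomes linear separability of the image of $\mathcal{B}$ one dimension higher, the blocks of $\mathcal{B}_+$ and $\mathcal{B}_-$ occupying the two slices $u_\epsilon = \pm 1$; in these linearized coordinates the inclusion in \textbf{(Inv1)}/\textbf{(Inv2)} is exactly the condition that makes the two slices compatibly separable by a common functional. I would therefore carry out the compatibility verification in the $u$-coordinates, where it is transparent, and then translate back, handling \textbf{(Inv2)} by the evident symmetry $\mathcal{B}_+ \leftrightarrow \mathcal{B}_-$, $z \leftrightarrow -z$.
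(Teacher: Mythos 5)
Your necessity argument is sound and is essentially the standard one (the paper itself omits the proof, deferring to \cite{BC-NN2015}): realizability of $\mathcal{B}$ in \eqref{eq:cnn-tree-nearest-nbd} is exactly the existence of a common normal $\alpha$ with two freely adjustable thresholds $\theta_\pm$, the sign of $z$ dictates the nesting $-\widetilde{\mathcal{B}}_+\subseteq\widetilde{\mathcal{B}}_-$ or $-\widetilde{\mathcal{B}}_-\subseteq\widetilde{\mathcal{B}}_+$, and the realized slice is a half-space cut, hence separable (modulo a small perturbation of the threshold to make the separation strict when some $\alpha\cdot v$ equals $\theta_-$, and modulo reading the paper's formula for $g$ as affine rather than homogeneous, which the paper's own example $\mathcal{B}_{[3,2]}$ forces). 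Your linearization $u_\epsilon=v_\epsilon$, $u_i=v_\epsilon v_i$ is also correct and attractive: \eqref{eq:cnn-state+}--\eqref{eq:cnn-state-} collapse to the single condition $z\,u_\epsilon+\alpha\cdot(u_1,\ldots,u_d)>1-a$, so $\mathcal{B}$ is realizable if and only if its image $\Phi(\mathcal{B})$ is an affine half-space cut of $V^{d+1}$.

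The gap is in the converse, precisely at the step you defer as ``transparent.'' In the $u$-coordinates the inclusion $-\mathcal{B}_+\subseteq\mathcal{B}_-$ says only that $\Phi(\mathcal{B})$ is monotone in $u_\epsilon$ (if $(+1,\tilde{v})\in\Phi(\mathcal{B})$ then $(-1,\tilde{v})\in\Phi(\mathcal{B})$); together with separability of the single slice $\widetilde{\mathcal{B}}_-$ this does \emph{not} yield joint separability, because \textbf{(Inv1)} imposes no separability on $\mathcal{B}_+$ at all. Concretely, for $d=2$ take $\mathcal{B}_+=\{(+;-,+),\,(+;+,-)\}$ and $\mathcal{B}_-=\{(-;+,-),\,(-;-,+),\,(-;-,-)\}$. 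Then $-\mathcal{B}_+\subseteq\mathcal{B}_-$ and $\widetilde{\mathcal{B}}_-$ is strictly separated from its complement $\{(+,+)\}$ by $g(v)=1-v_1-v_2$, so \textbf{(Inv1)} holds; yet $\mathcal{B}$ is not realizable: membership of $(-1,1)$ and $(1,-1)$ in $\widetilde{\mathcal{B}}_+$ forces, upon summing the two instances of \eqref{eq:cnn-state+}, $0>2(1-a-z)$, while exclusion of $(1,1)$ and $(-1,-1)$ forces $0\leq 2(1-a-z)$. Equivalently, both $\Phi(\mathcal{B})$ and its complement contain an antipodal pair of $V^3$ (namely $(+,-,+),(-,+,-)$ and $(+,+,+),(-,-,-)$), so no affine functional separates them. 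Thus your final compatibility claim --- that the inclusion ``is exactly the condition'' making the two slices separable by a common functional --- is false as stated, and the plan cannot be completed without strengthening the hypothesis so that $\widetilde{\mathcal{B}}_+$ is also cut out, by a hyperplane with the \emph{same} normal (which is how the separation condition actually operates in \cite{BC-NN2015}). Note that this failure is not merely yours: it shows the proposition, read literally with separability demanded of $\mathcal{B}_-$ alone, requires exactly this strengthening, so the missing step is the entire mathematical content rather than a routine verification.
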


Let
\begin{equation}\label{eq:parameter-space}
\mathbb{R}^{d+2} = \{ (A, z) |\ A \in \mathbb{R}^{d+1}, z \in \mathbb{R} \}
\end{equation}
denote the parameter space. Theorem \ref{thm:partition} demonstrates that $\mathbb{R}^{d+2}$ can be partitioned into finitely equivalent sub-regions such that two sets of parameters induce identical basic sets of admissible local patterns if and only if they belong to the same partition in the parameter space. We skip the proof of Theorem \ref{thm:partition} for the compactness of this paper since the demonstration is similar to ths discussion in \cite{HJLL-IJBCASE2000}.

\begin{theorem}\label{thm:partition}
There exists a positive integer $K = K(d)$ and a unique collection of open subsets $\{P_i\}_{i=1}^K$ of the parameter space \eqref{eq:parameter-space} satisfying
\begin{enumerate}[\bf (i)]
  \item $\mathbb{R}^{d+2} = \bigcup\limits_{i=1}^K \overline{P}_k$;
  \item $P_i \bigcap P_j = \varnothing$ for all $i \neq j$;
  \item $\mathcal{B}(A, z) = \mathcal{B}(A', z')$ if and only if $(A, z), (A', z') \in P_i$ for some $1 \leq i \leq K$.
\end{enumerate}
Herein, $\overline{P}$ indicates the closure of $P$ in $\mathbb{R}^{d+2}$.
\end{theorem}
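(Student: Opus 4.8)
The plan is to recognize that the assignment $(A,z)\mapsto \mathcal{B}(A,z)$ is governed by a finite family of affine hyperplanes, so that the desired partition is nothing but the chamber decomposition of the resulting arrangement. First I would linearize the defining conditions: for each fixed $v=(v_1,\ldots,v_d)\in V^d$ introduce the affine functionals $L_v^{+}(A,z)=a-1+z+\alpha\cdot v$ and $L_v^{-}(A,z)=a-1-z-\alpha\cdot v$ on $\mathbb{R}^{d+2}$. By the very definitions of $\widetilde{\mathcal{B}}_{\pm}$ we have $v\in\widetilde{\mathcal{B}}_{+}(A,z)$ iff $L_v^{+}(A,z)>0$ and $v\in\widetilde{\mathcal{B}}_{-}(A,z)$ iff $L_v^{-}(A,z)>0$. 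Since $\mathcal{B}(A,z)=\mathcal{B}_{+}\cup\mathcal{B}_{-}$ is completely determined by the pair of subsets $(\widetilde{\mathcal{B}}_{+},\widetilde{\mathcal{B}}_{-})$ of $V^d$, the value $\mathcal{B}(A,z)$ depends only on the sign vector $\big(\operatorname{sgn}L_v^{+},\operatorname{sgn}L_v^{-}\big)_{v\in V^d}$. This produces at most $2^{d+1}$ hyperplanes $H_v^{\pm}=\{L_v^{\pm}=0\}$; let $\mathcal{H}$ denote their union.

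Next I would define $\{P_i\}_{i=1}^{K}$ to be the connected components (chambers) of $\mathbb{R}^{d+2}\setminus\mathcal{H}$. The standard bound on the number of regions of an arrangement of $2^{d+1}$ hyperplanes in $\mathbb{R}^{d+2}$ gives finitely many components, defining $K=K(d)$ (coincidences among the $H_v^{\pm}$ only lower the count), and disjointness yields condition (ii) at once. On any connected set missing $\mathcal{H}$ each $L_v^{\pm}$ keeps a constant sign, so each $\widetilde{\mathcal{B}}_{\pm}$, and hence $\mathcal{B}$, is constant on each $P_i$; this is the ``if'' half of (iii). For the ``only if'' half, observe that a fixed sign vector determines the set $\{x:\operatorname{sgn}L_v^{\pm}(x)=\varepsilon_v^{\pm}\}$, an intersection of open half-spaces, hence convex and therefore connected; consequently each realizable sign vector corresponds to exactly one chamber, and the chambers biject with the realized sign vectors. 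Because on $\mathbb{R}^{d+2}\setminus\mathcal{H}$ the sign vector and $\mathcal{B}$ carry the same information, two points of $\bigcup_i P_i$ satisfy $\mathcal{B}(A,z)=\mathcal{B}(A',z')$ precisely when they share a sign vector, i.e. precisely when they lie in the same $P_i$, which establishes (iii).

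For condition (i), note that $\bigcup_i P_i=\mathbb{R}^{d+2}\setminus\mathcal{H}$ is open and dense, its complement $\mathcal{H}$ being a finite union of hyperplanes. Since the $P_i$ are finite in number, any point $p$ is a limit of points in $\bigcup_i P_i$, and by pigeonhole infinitely many terms of an approximating sequence lie in a single $P_i$, whence $p\in\overline{P_i}$; thus $\bigcup_i\overline{P_i}=\mathbb{R}^{d+2}$. For uniqueness I would show that every point of $\mathcal{H}$ is a point of non-constancy of $\mathcal{B}$: if $w\in H_v^{+}$ then arbitrarily near $w$ there are points with $L_v^{+}>0$ and with $L_v^{+}<0$, which differ in the membership of $v$ in $\widetilde{\mathcal{B}}_{+}$ and hence realize different $\mathcal{B}$. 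Therefore no open set on which $\mathcal{B}$ is constant can meet $\mathcal{H}$, so every admissible $P_i$ must lie in $\mathbb{R}^{d+2}\setminus\mathcal{H}$; identifying $P_i$ with the interior of the level set $\{(A,z):\mathcal{B}(A,z)=\mathcal{B}(P_i)\}$ then pins each cell down to a single chamber, yielding the claimed unique collection.

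I expect the main obstacle to be the uniqueness clause rather than the construction. Properties (i)--(iii) follow cleanly once the arrangement is in place, but making ``unique'' precise requires the boundary analysis above---confirming that distinct chambers genuinely realize distinct pattern sets and that the only open, $\mathcal{B}$-constant, maximal cells are exactly the chambers---so that no alternative open partition (for instance, a chamber with a lower-dimensional piece deleted) can satisfy (i)--(iii) once the cells are required to be the full interiors of the $\mathcal{B}$-level sets. The existence part, by contrast, reduces entirely to finiteness of the region count, for which only the bound, not the exact value of $K(d)$, is needed.
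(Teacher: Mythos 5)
Your proposal is correct and is essentially the argument the paper intends: the paper omits the proof, citing \cite{HJLL-IJBCASE2000}, and that reference's method is exactly your hyperplane-arrangement decomposition of the parameter space by the affine functionals $L_v^{\pm}$ (reflected also in the paper's own $d=2$ discussion, where the lines $a-1\pm z = \pm(a_1y_{w1}+a_2y_{w2})$ and the sign/magnitude orderings of $a_1,a_2$ partition the $a$-$z$ plane into the $8\times 25$ sub-regions). Your added care in restricting the biconditional in (iii) to points of $\bigcup_i P_i$ (since points on the arrangement $\mathcal{H}$ realize the same $\mathcal{B}$ as an adjacent chamber, all functionals having $a$-coefficient $+1$) and your boundary analysis for uniqueness are sound refinements of the same approach rather than a departure from it.
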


\begin{remark}
A straightforward examination asserts that, whenever a set of parameters $(A, z)$ is given, the output solution space $\mathbf{Y}$ is a Markov tree-shift since $\mathbf{Y} = \mathsf{X}_{\mathcal{F}}$, where $\mathcal{F} = \{-1, 1\}^{\Delta_1} \setminus \mathcal{B}(A, z)$.
\end{remark}

We consider the case where $d = 2$ as an example. Note that, whenever the parameters $a_1$ and $a_2$ are determined, \eqref{eq:cnn-state+} and \eqref{eq:cnn-state-} partition the $a$-$z$ plane into $25$ regions; the ``order" (i.e., the relative position) of lines $a - 1 +(-1)^{\ell} z = (-1)^{\ell} (a_1 y_{w1} + a_2 y_{w2})$, $\ell = 1, 2$, can be uniquely determined by the following procedures:
\begin{enumerate}[1)]
\item The signs of $a_1, a_2$ (i.e., the parameters are positive or negative).
\item The magnitude of $a_1, a_2$ (i.e., $|a_1| > |a_2|$ or $|a_1| < |a_2|$).
\end{enumerate}
This partitions $a$-$z$ plane into $8 \times 25 = 200$ sub-regions. According to Theorem \ref{thm:partition}, the parameter space $\mathcal{P}^4$ is partitioned into less than $200$ equivalent sub-regions.

\subsection{Entropy bifurcation of neural networks on Cayley trees} 

Suppose that, for each neuron of a neural network on Cayley tree, we substitute its output pattern $1$ (resp.~$-1$) with $+$ (resp.~$-$); then the output solution space $\mathbf{Y}$ of a CTNN is a Markov tree-shift over $\mathcal{A} = \{+, -\}$. We denote the TSFT $\mathbf{Y}$ by $\mathbf{Y}_{\mathcal{B}}$ when we want to emphasize the basic set of admissible patterns $\mathcal{B}$. This subsection investigates the entropy and the entropy bifurcation diagram of $\mathbf{Y}$.

We start with the following lemma, for which the proof can be done via straightforward elucidation, thus it is omitted.

\begin{lemma}\label{lem:zero-entropy-trivial case}
Suppose that $\mathbf{Y}_{\mathcal{B}}$ is an output solution space such that $u_{\epsilon} = v_{\epsilon}$ for all $u, v \in \mathcal{B}$. Then $h(\mathbf{Y}_{\mathcal{B}}) = 0$.
\end{lemma}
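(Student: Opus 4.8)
The plan is to show that the stated hypothesis forces $\mathbf{Y}_{\mathcal{B}}$ to be a trivial tree-shift---either empty or a single constant tree---so that its entropy must vanish. The condition $u_{\epsilon} = v_{\epsilon}$ for all $u, v \in \mathcal{B}$ simply says that every admissible $2$-block carries the same root symbol. Since the alphabet is $\mathcal{A} = \{+, -\}$ and $\mathcal{B} = \mathcal{B}_+ \bigcup \mathcal{B}_-$, this is equivalent to exactly one of $\mathcal{B}_+, \mathcal{B}_-$ being empty (the case $\mathcal{B} = \varnothing$ giving $\mathbf{Y}_{\mathcal{B}} = \varnothing$ immediately). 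Without loss of generality I would take $\mathcal{B}_- = \varnothing$, so that $u_{\epsilon} = +$ for every $u \in \mathcal{B}$.

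The key step is then to observe that every $t \in \mathbf{Y}_{\mathcal{B}}$ is the constant tree $t \equiv +$. Fix an arbitrary node $w \in \Sigma^*$. Because $\mathbf{Y}_{\mathcal{B}}$ is the Markov tree-shift whose admissible $2$-blocks are exactly the elements of $\mathcal{B}$, the $2$-block of $t$ rooted at $w$, namely $(t_w; t_{w1}, \ldots, t_{wd})$, must belong to $\mathcal{B}$; hence its root satisfies $t_w = +$. As $w$ was arbitrary and every node of an infinite tree is the root of some $2$-block, we obtain $t_w = +$ for all $w \in \Sigma^*$. Consequently $\mathbf{Y}_{\mathcal{B}}$ contains at most the single constant tree, so $|B_n(\mathbf{Y}_{\mathcal{B}})| \leq 1$ for every $n$, and therefore $h(\mathbf{Y}_{\mathcal{B}}) = 0$ by the definition \eqref{eq:entropy-tree-shift}.

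Alternatively, I would phrase the same conclusion in the vocabulary of Section~\ref{sec:dynamics-cayley-tree}: the hypothesis makes both symbols inessential, since no tree in $\mathbf{Y}_{\mathcal{B}}$ has root $-$ and the only tree with root $+$ is constant. The vanishing of the entropy then falls under the opening case of the proof of Theorem~\ref{thm:entropy-set-2symbol}, in which $a_1$ and $a_2$ are both inessential.

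The argument is entirely combinatorial, so I do not anticipate a genuine obstacle. The one point deserving a line of care is the degenerate evaluation of the double-logarithmic entropy in \eqref{eq:entropy-tree-shift} when $|B_n(\mathbf{Y}_{\mathcal{B}})| = 1$, so that $\ln^2 |B_n(\mathbf{Y}_{\mathcal{B}})|$ is not literally defined; I would dispose of this by invoking the standard convention that a tree-shift with a bounded number of blocks---in particular a single-point or empty system---has zero entropy.
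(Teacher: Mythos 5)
Your proof is correct and complete: the paper omits the proof of this lemma as a ``straightforward elucidation,'' and your argument---all admissible $2$-blocks share one root symbol, so (taking $\mathcal{B}_{-}=\varnothing$ without loss of generality) every node of every tree in $\mathbf{Y}_{\mathcal{B}}$ is the root of a $2$-block in $\mathcal{B}$ and hence carries the symbol $+$, forcing $\mathbf{Y}_{\mathcal{B}}$ to contain at most the constant tree and giving $|B_n(\mathbf{Y}_{\mathcal{B}})|\leq 1$---is precisely the intended elementary one, consistent with the ``both symbols inessential'' case in the proof of Theorem~\ref{thm:entropy-set-2symbol}. Your explicit handling of the degenerate evaluation of $\ln^2|B_n(\mathbf{Y}_{\mathcal{B}})|$ via the convention that empty or single-tree systems have zero entropy is a point the paper glosses over, and is appropriate.
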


Based on Lemma \ref{lem:zero-entropy-trivial case}, we may assume that, for each basic set of admissible local patterns $\mathcal{B}$, there exist $u, v \in \mathcal{B}$ such that $u_{\epsilon} \neq v_{\epsilon}$. We call such a set of local patterns $\mathcal{B}$ \emph{nontrivial}; an output solution space $\mathbf{Y}_{\mathcal{B}}$ is called \emph{nontrivial} if its corresponding set of local patterns $\mathcal{B}$ is nontrivial.

\begin{theorem}\label{thm:CTNN-entropy-set}
Suppose that $\mathbf{Y}$ is an output solution spaces of \eqref{eq:cnn-tree-nearest-nbd}. Then $h(\mathbf{Y}) = 0$ or $\ln d$.
\end{theorem}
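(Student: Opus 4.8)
The plan is to reduce to the two-symbol analysis already carried out in Theorem \ref{thm:entropy-set-2symbol} and then exploit the half-space structure that the separation property imposes on $\mathcal{B}$. Since the alphabet is $\mathcal{A} = \{+,-\}$, we are in the case $k=2$. If $\mathcal{B}$ is trivial, Lemma \ref{lem:zero-entropy-trivial case} gives $h(\mathbf{Y}) = 0$, so we may assume $\mathcal{B}$ is nontrivial and split according to essentiality. If both $+$ and $-$ are essential, Proposition \ref{prop:essential-symbols-ln-d} yields $h(\mathbf{Y}) = \ln d$; if both are inessential, $h(\mathbf{Y}) = 0$. The whole content therefore lies in the case where exactly one symbol is essential, say $+$ (the case of $-$ being symmetric), where a priori Theorem \ref{thm:entropy-set-2symbol} only tells us $h(\mathbf{Y}) = \ln\lambda$ for some $1 \le \lambda \le d$.

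First I would record the explicit shape of the admissible children sets. By the definitions of $\widetilde{\mathcal{B}}_\pm$ together with \eqref{eq:cnn-state+}--\eqref{eq:cnn-state-}, writing $\alpha = (a_1,\dots,a_d)$ one has $\widetilde{\mathcal{B}}_+ = \{v \in V^d : \alpha\cdot v > 1 - a - z\}$ and $\widetilde{\mathcal{B}}_- = \{v\in V^d : \alpha\cdot v < a - 1 - z\}$; that is, each is the trace on the hypercube $V^d$ of a half-space for the \emph{same} linear functional $v \mapsto \alpha \cdot v$. This is exactly the separation structure of Proposition \ref{prop:separation-property}, and it is the feature that forbids intermediate entropies.

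Next I would pin down the inessential symbol. Writing the SNRE of $\mathbf{Y}$ and letting $\ell^-_c$ be the number of $v \in \widetilde{\mathcal{B}}_-$ with exactly $c$ plus-children, one has $\gamma_{-;n} = \sum_c \ell^-_c\,\gamma_{+;n-1}^{\,c}\gamma_{-;n-1}^{\,d-c}$. Picking $N$ with $\gamma_{+;N}\ge 2$, which exists since $+$ is essential, any term with $c\ge 1$ would force $\gamma_{-;N+1} \ge 2$, contradicting $\gamma_{-;n}\equiv 1$. Hence $\ell^-_c = 0$ for all $c\ge 1$, so $\widetilde{\mathcal{B}}_-$ can only contain the all-minus pattern; nontriviality gives $\widetilde{\mathcal{B}}_- = \{(-1,\dots,-1)\}$. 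Consequently all-minus is the unique point of $V^d$ lying strictly below the threshold $a-1-z$, i.e.\ it is the strict minimizer of $v\mapsto \alpha\cdot v$ on $V^d$; since the minimum of $\sum_i a_i v_i$ is attained at $v_i = -\operatorname{sign}(a_i)$, uniqueness at all-minus forces $a_i > 0$ for every $i$.

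Finally I would feed this back into $\widetilde{\mathcal{B}}_+$. With all $a_i > 0$, the all-plus pattern $\mathbf{1}$ is the unique maximizer of $\alpha\cdot v$ on $V^d$, and since $+$ is essential the set $\widetilde{\mathcal{B}}_+$ is nonempty; hence $\alpha\cdot\mathbf{1} \ge \alpha\cdot v_0 > 1 - a - z$ for any $v_0 \in \widetilde{\mathcal{B}}_+$, so $\mathbf{1}\in\widetilde{\mathcal{B}}_+$. In other words the constant $+$ block belongs to $\mathcal{B}$ at the essential symbol $+$, and Corollary \ref{cor:loop-imply-ln-d} immediately gives $h(\mathbf{Y}) = \ln d$. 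The symmetric case ($-$ essential, $+$ inessential) forces all $a_i > 0$ again, puts the all-minus block in $\mathcal{B}$, and yields $h(\mathbf{Y}) = \ln d$ likewise. The main obstacle is the middle step: translating ``$-$ is inessential'' into the rigid geometric statement that all-minus is the strict minimizer, hence all weights are positive, which is precisely what rules out the intermediate values $\ln\lambda$, $2 \le \lambda \le d-1$, permitted by Theorem \ref{thm:entropy-set-2symbol} for general two-symbol TSFTs.
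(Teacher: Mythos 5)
Your proof is correct, and while it follows the paper's skeleton --- dispose of the trivial case via Lemma \ref{lem:zero-entropy-trivial case}, of the two-essential case via Proposition \ref{prop:essential-symbols-ln-d}, reduce the remaining case to showing that the constant block on the essential symbol is admissible, and conclude with Corollary \ref{cor:loop-imply-ln-d} --- you handle the crucial middle step by a genuinely different argument. The paper, after asserting that inessentiality of $-$ forces every block of $\mathcal{B}_-$ to have all-minus children (a step it states tersely and you justify carefully via the SNRE coefficients $\ell^-_c$ and the propagation $\gamma_{+;N}\ge 2 \Rightarrow \gamma_{-;N+1}\ge 2$), invokes Proposition \ref{prop:separation-property}: under \textbf{(Inv2)}, $-\mathcal{B}_- \subseteq \mathcal{B}_+$ puts the all-plus block in $\mathcal{B}_+$ at once, while under \textbf{(Inv1)}, $-\mathcal{B}_+ \subseteq \mathcal{B}_- = \{(-,-,\ldots,-)\}$ together with nontriviality forces $\mathcal{B}_+ = \{(+,+,\ldots,+)\}$; either way Corollary \ref{cor:loop-imply-ln-d} applies. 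You instead bypass Proposition \ref{prop:separation-property} entirely and work from the defining inequalities \eqref{eq:cnn-state+}--\eqref{eq:cnn-state-}: since $\widetilde{\mathcal{B}}_-$ is the trace on $V^d$ of the half-space $\{\alpha \cdot v < a-1-z\}$, its being the singleton $\{-\mathbf{1}\}$ forces (flipping one coordinate at a time, $-\sum_j a_j + 2a_i \geq a-1-z > -\sum_j a_j$) that $a_i > 0$ for every $i$, whence $\mathbf{1}$ maximizes $\alpha \cdot v$ and must lie in the nonempty super-level set $\widetilde{\mathcal{B}}_+$; the symmetric case works the same way. Both routes are sound, and each buys something: the paper's deduction is two lines given Proposition \ref{prop:separation-property} and operates at the abstract level of realizable basic sets, whereas yours is self-contained (it does not depend on Proposition \ref{prop:separation-property}, whose proof the paper only cites) and extracts the quantitative rigidity that a single inessential symbol in a nontrivial CTNN forces all coupling weights $a_i$ to be strictly positive --- a geometric fact that is invisible in the paper's proof and is consonant with the critical curve \eqref{eq:W-shape} of Proposition \ref{prop:entropy-region-W-equation}. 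One cosmetic remark: inessentiality of $-$ literally gives $\gamma_{-;n}\le 1$ rather than $\gamma_{-;n}\equiv 1$; nontriviality of $\mathcal{B}$ supplies $\gamma_{-;n}\ge 1$, so your contradiction stands as written.
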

\begin{proof}
Lemma \ref{lem:zero-entropy-trivial case} suggests that we only need to consider nontrivial output solution spaces; that is, there exist $u, v \in \mathcal{B}$ such that $u_{\epsilon} = +$ and $v_{\epsilon} = -$. Proposition \ref{prop:essential-symbols-ln-d} demonstrates that $h(\mathbf{Y}) = \ln d$ if both symbols $+$ and $-$ are essential. It remains to consider the case where exactly one symbol is inessential.

Without loss of generality, we may assume that $-$ is inessential. In other words, if $u \in \mathcal{B}$ satisfies $u_{\epsilon} = -$, then $u_i = -$ for $1 \leq i \leq d$. Proposition \ref{prop:separation-property} shows that there exists $v \in \mathcal{B}$ such that $v_{\epsilon} = v_1 = \cdots = v_d = +$; Corollary \ref{cor:loop-imply-ln-d} indicates that $h(\mathbf{Y}_{\mathcal{B}}) = \ln d$. This completes the proof.
\end{proof}

The well-known entropy minimality problem investigates when the entropy of any proper subspace is strictly smaller than the entropy of the original shift space. For the case of CTNNs, the entropy minimality problem is equivalent to investigating under what condition $h(\mathbf{Y}_{\mathcal{B}'}) < h(\mathbf{Y}_{\mathcal{B}})$, where $\mathcal{B}'$ is obtained by deleting a pattern in $\mathcal{B}$. Furthermore, it follows from Theorem \ref{thm:CTNN-entropy-set} that the change of entropy is from $\ln d$ to $0$; in other words, simply removing a pattern from the basic set of admissible local patterns $\mathcal{B}$ makes significant influence to the original space. Equation \eqref{eq:W-shape} characterizes those parameters which make such a tremendous influence.

The discussion in the previous subsection shows that, once the parameters $a_1, \ldots, a_d$ are fixed, \eqref{eq:cnn-state+} and \eqref{eq:cnn-state-} partition the $a$-$z$ plane into $(2^d+1)^2$ regions. We encode these regions by $[p, q]$ for $0 \leq p, q \leq 2^d$ and denote the corresponding basic set of admissible local patterns as $\mathcal{B}_{[p, q]}$. More specifically, $\mathcal{B}_{[p, q]} = \mathcal{B}_{[p, q]; +} \bigcup \mathcal{B}_{[p, q]; -}$ which satisfies $|\mathcal{B}_{[p, q]; +}| = p$ and $|\mathcal{B}_{[p, q]; -}| = q$. For simplicity, we denote $\mathbf{Y}_{\mathcal{B}_{[p, q]}}$ by $\mathbf{Y}_{[p, q]}$. The following proposition comes immediately.

\begin{proposition}
Suppose that the parameters $a_1, \ldots, a_d$ are given. Then $\mathbf{Y}_{[p, q]} \cong \mathbf{Y}_{[q, p]}$ for $0 \leq p, q \leq 2^d$.
\end{proposition}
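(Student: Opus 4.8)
The plan is to realize the claimed conjugacy through the global sign-flip symmetry of the two-letter alphabet. I would define the map $\phi \colon \{+,-\}^{\Sigma^*} \to \{+,-\}^{\Sigma^*}$ that replaces the label at every node by its opposite, sending $+ \mapsto -$ and $- \mapsto +$ pointwise. Because $\phi$ is induced by a single bijection of $\mathcal{A}$, it is an involutive homeomorphism of the full tree-shift and it satisfies $\phi \circ \sigma_w = \sigma_w \circ \phi$ for every $w \in \Sigma^*$; hence $\phi$ is automatically a conjugacy of the ambient space onto itself, and the whole statement reduces to proving that $\phi$ carries $\mathbf{Y}_{[p,q]}$ onto $\mathbf{Y}_{[q,p]}$.

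First I would fix parameters $(A, z)$ lying in the region $[p,q]$, so that $|\widetilde{\mathcal{B}}_+(A,z)| = p$ and $|\widetilde{\mathcal{B}}_-(A,z)| = q$, and examine the reflection $z \mapsto -z$, which leaves $a, a_1, \ldots, a_d$ untouched. The crux is a short sign-bookkeeping computation: substituting $v \mapsto -v$ into the inequality $a - 1 + z > -\alpha \cdot v$ defining $\widetilde{\mathcal{B}}_+(A,z)$ produces precisely the inequality $a - 1 + z > \alpha \cdot v$ defining $\widetilde{\mathcal{B}}_-(A,-z)$, and symmetrically for the complementary pair. This yields the two identities $\widetilde{\mathcal{B}}_-(A,-z) = -\widetilde{\mathcal{B}}_+(A,z)$ and $\widetilde{\mathcal{B}}_+(A,-z) = -\widetilde{\mathcal{B}}_-(A,z)$, from which $|\widetilde{\mathcal{B}}_+(A,-z)| = q$ and $|\widetilde{\mathcal{B}}_-(A,-z)| = p$, so that $(A,-z)$ lies in the region $[q,p]$.

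At the level of admissible $2$-blocks, these identities say exactly that $\phi$ sends $\mathcal{B}(A,z)$ bijectively onto $\mathcal{B}(A,-z) = \mathcal{B}_{[q,p]}$: the flip of the root interchanges the roles of $\mathcal{B}_+$ and $\mathcal{B}_-$, while the simultaneous flip of the $d$ children realizes the substitution $v \mapsto -v$. Since a tree belongs to $\mathbf{Y}_{[p,q]} = \mathsf{X}_{\mathcal{F}}$ exactly when each of its $2$-blocks lies in $\mathcal{B}(A,z)$, this block-level bijection promotes immediately to the equality $\phi(\mathbf{Y}_{[p,q]}) = \mathbf{Y}_{[q,p]}$, which is the desired conjugacy.

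I do not anticipate a genuine obstacle. The only delicate point is pairing the two substitutions correctly so that the constant terms $a-1$ align, i.e. the child-flip $v \mapsto -v$ must be matched with the threshold-flip $z \mapsto -z$; getting this pairing wrong is the natural place to slip. Beyond that, one only needs to record that $\phi$ commutes with every shift, so that it qualifies as a conjugacy rather than a mere bijection of sets, and both verifications are immediate once the inequalities are written out.
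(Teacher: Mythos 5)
Your proposal is correct and follows essentially the same route as the paper: the paper's one-line proof rests on exactly the identities $\mathcal{B}_{[p,q];+} = -\mathcal{B}_{[q,p];-}$ and $\mathcal{B}_{[p,q];-} = -\mathcal{B}_{[q,p];+}$, which you derive explicitly via the reflection $z \mapsto -z$ in the defining inequalities. Your write-up merely makes explicit what the paper leaves implicit, namely that the global sign-flip $\phi$ is a shift-commuting homeomorphism carrying $\mathbf{Y}_{[p,q]}$ onto $\mathbf{Y}_{[q,p]}$.
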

\begin{proof}
Since $\mathcal{B}_{[p, q]; +} = - \mathcal{B}_{[q, p]; -}$ and $\mathcal{B}_{[p, q]; -} = - \mathcal{B}_{[q, p]; +}$, the desired results is then derived.
\end{proof}

Suppose that the parameters $a_1, \ldots, a_d$ are given. A pair of parameters $(a, z)$ is called \emph{critical} if, for each $r > 0$, there exists $(a', z'), (a'', z'') \in B_r (a, z)$ such that $h(\mathbf{Y}_{\mathcal{B}'}) = \ln d$ and $h(\mathbf{Y}_{\mathcal{B}''}) = 0$, where $B_r (a, z)$ is the $r$-ball centered at $(a, z)$ and $\mathcal{B}' = \mathcal{B}(A', z'), \mathcal{B}'' = \mathcal{B}(A'', z''), A' = (a', a_1, \ldots, a_d)$, and $A'' = (a'', a_1, \ldots, a_d)$.

\begin{proposition}\label{prop:entropy-region-W-equation}
Suppose that the parameters $a_1, \ldots, a_d$ are given. Then $h(\mathbf{Y}_{[p, q]}) = 0$ if and only if
$$
\min\{p, q\} = 0 \text{ or } \max\{p, q\} = 1,
$$
where $0 \leq p, q \leq 2^d$. Furthermore, let $\ell$ be the index such that $|a_{\ell}| = \min \{|a_i|: 1 \leq i \leq d\}$. Then $(a, z)$ is critical if and only if
\begin{equation}\label{eq:W-shape}
a - 1 = \big||z| - |a_{\ell}|\big| - \sum_{i \neq \ell} |a_i|.
\end{equation}
\end{proposition}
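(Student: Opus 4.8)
The plan is to prove the two assertions separately: first a purely combinatorial description of when $h(\mathbf{Y}_{[p,q]})$ vanishes in terms of $(p,q)$, and then the identification of the critical locus with the common boundary of $\{h=0\}$ and $\{h=\ln d\}$ in the $a$-$z$ plane. Throughout I fix $a_1,\dots,a_d$, write $\alpha=(a_1,\dots,a_d)$, and recall that $\widetilde{\mathcal B}_+=\{v\in V^d:\alpha\cdot v>1-a-z\}$ picks out the $p$ vertices of largest inner product $\alpha\cdot v$, while $\widetilde{\mathcal B}_-=\{v\in V^d:\alpha\cdot v<a-1-z\}$ picks out the $q$ vertices of smallest inner product. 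By Lemma~\ref{lem:zero-entropy-trivial case} and Theorem~\ref{thm:CTNN-entropy-set} the entropy is always $0$ or $\ln d$, so the first task is to decide which value occurs for each $(p,q)$; I will show that $h=0$ exactly when $\min\{p,q\}=0$ or $(p,q)=(1,1)$, which is the asserted condition $\min\{p,q\}=0$ or $\max\{p,q\}=1$.

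For the vanishing direction I argue case by case. If $\min\{p,q\}=0$, say $q=0$, then $\widetilde{\mathcal B}_-=\varnothing$, so a node labelled $-$ is impossible (its $d$ children would have to realize a forbidden block); every admissible tree is then the constant $+$ tree, $|\mathbf{Y}_{[p,0]}|\le1$, and $h=0$. If $(p,q)=(1,1)$ the shift is deterministic, each symbol admitting a unique child block, so a tree is fixed by its root and $|\mathbf{Y}_{[1,1]}|\le2$, again giving $h=0$. In the remaining range $\min\{p,q\}\ge1$ and $\max\{p,q\}\ge2$, say $p\ge2$, so $+$ is essential; if $-$ is essential as well, Proposition~\ref{prop:essential-symbols-ln-d} gives $h=\ln d$. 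If instead $-$ is inessential then, $+$ being essential, no admissible $-$-block may contain a $+$ child (else $\gamma_{2;n}$ would grow), so $\widetilde{\mathcal B}_-\subseteq\{(-1,\dots,-1)\}$ and $q\le1$; because $p\ge2$ is incompatible with condition (Inv1), Proposition~\ref{prop:separation-property} forces (Inv2), i.e.\ $-\mathcal B_-\subseteq\mathcal B_+$, which deposits the constant $+$ block into $\mathcal B_+$, and Corollary~\ref{cor:loop-imply-ln-d} then yields $h=\ln d$.

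For the critical set I change coordinates to $\xi=a+z$ and $\eta=a-z$, so that $p=|\{v:\alpha\cdot v>1-\xi\}|$ and $q=|\{v:\alpha\cdot v<\eta-1\}|$ are nondecreasing step functions of $\xi$ and $\eta$ respectively. Their jumps are governed by the two largest values of $\alpha\cdot v$: the maximum $\sum_i|a_i|$, attained at $v=\operatorname{sgn}(\alpha)$, and the runner-up $\sum_i|a_i|-2|a_\ell|$, obtained by flipping the coordinate of least modulus $|a_\ell|$. Hence $p$ passes $0\to1$ as $\xi$ crosses $m_1:=1-\sum_i|a_i|$ and $1\to\ge2$ as $\xi$ crosses $m_2:=m_1+2|a_\ell|$, and symmetrically for $q$ in $\eta$. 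By the first part, $\{h=\ln d\}=\big((m_1,\infty)\times(m_1,\infty)\big)\setminus(m_1,m_2]^2$, whose boundary against $\{h=0\}$ consists of the two outer rays $\{\xi=m_1,\ \eta\ge m_2\}$, $\{\eta=m_1,\ \xi\ge m_2\}$ together with the two inner segments $\{\xi=m_2,\ m_1\le\eta\le m_2\}$, $\{\eta=m_2,\ m_1\le\xi\le m_2\}$. Substituting $a=(\xi+\eta)/2$, $z=(\xi-\eta)/2$ and splitting on the sign of $|z|-|a_\ell|$, the outer rays become $a-1=|z|-\sum_i|a_i|$ on $|z|\ge|a_\ell|$ and the inner segments become $a-1=2|a_\ell|-|z|-\sum_i|a_i|$ on $|z|\le|a_\ell|$; both coincide with \eqref{eq:W-shape}, identifying the critical locus with that curve.

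The hard part will be the essentiality bookkeeping in the borderline range $\min\{p,q\}\ge1$, $\max\{p,q\}\ge2$: one must confirm that an inessential symbol never extends to an infinite subtree—so that deleting its row and column, as in the passage from $M_E$ to $M_E'$, is legitimate and no spurious positive entropy survives—and that the separation property really does place the required constant block in $\mathcal B$ rather than leaving an empty or singleton component (which I interpret as $h=0$, consistent with the clause $\min\{p,q\}=0$). A lesser point is the evaluation of the thresholds $m_1,m_2$: the two extreme values of $\alpha\cdot v$ are immediate, but I should note that ties among the smaller values of $\alpha\cdot v$ do not disturb the $1\to\ge2$ transition, and treat the degenerate case $a_\ell=0$ (where $m_1=m_2$ and the inner $\Lambda$ of \eqref{eq:W-shape} collapses) on its own.
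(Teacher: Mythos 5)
Your proposal is correct and takes essentially the same route as the paper: the first part reruns the essential/inessential dichotomy from the proof of Theorem \ref{thm:CTNN-entropy-set} (with Proposition \ref{prop:separation-property} forcing the constant block and Corollary \ref{cor:loop-imply-ln-d} giving $\ln d$), and the second part rests on the top two values of $\alpha \cdot v$ --- your thresholds $m_1 = 1 - K_1$ and $m_2 = 1 - K_2$ are exactly the paper's $K_1 = \sum_i |a_i|$ and $K_2 = \sum_{i \neq \ell}|a_i| - |a_\ell|$, with $(K_1 - K_2)/2 = |a_\ell|$. Your $(\xi,\eta)$-coordinate boundary analysis simply carries out the ``careful but straightforward verification'' that the paper asserts and omits, and the degenerate case $a_\ell = 0$ you flag is likewise glossed over there.
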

\begin{proof}
Observe that the proof of Theorem \ref{thm:CTNN-entropy-set} demonstrates that $h(\mathbf{Y}_{[p, q]}) = 0$ if and only if $\min\{p, q\} = 0$ or $\max\{p, q\} = 1$. It remains to show that $(a, z)$ is critical if and only if $(a, z)$ satisfies \eqref{eq:W-shape}.

Let $C = \{\sum_{i=1}^d \ell_i a_i: \ell_i \in \{-1, 1\} \text{ for all } i\}$, and let
$$
K_1 = \max C \quad \text{and} \quad K_2 = \max C \setminus \{K_1\}
$$
be the largest and the second largest elements in $C$, respectively. A careful but straightforward verification asserts that $(a, z)$ is critical if and only if
$$
a - 1 = \left| |z| -\dfrac{K_1 - K_2}{2}\right| - \dfrac{K_1 + K_2}{2}.
$$
(See Figure \ref{fig:entropy-diagram-general-case} for more information.) The desired result follows from the fact that
$$
K_1 = \sum_{i=1}^d |a_i| \quad \text{and} \quad K_2 = \sum_{i \neq \ell} |a_i| - |a_{\ell}|.
$$
\end{proof}

\begin{figure}[tbp]
\begin{center}
\includegraphics[scale=0.8,page=5]{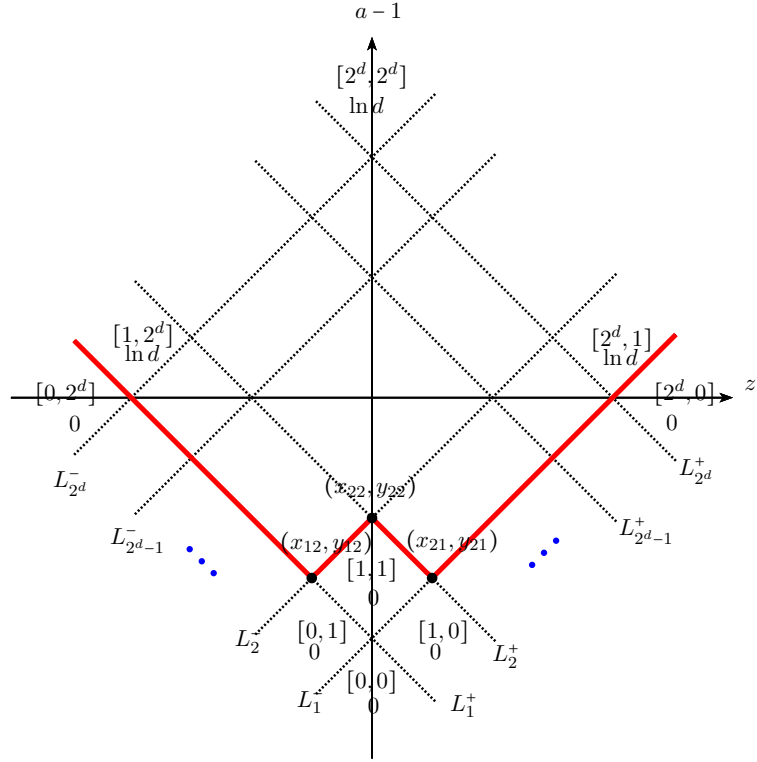}
\end{center}
\caption{Entropy bifurcation diagram of neural networks on Cayley trees. Whenever all the parameters are fixed except $a$ and $z$, the $a$-$z$ plane is partitioned into $(2^d + 1)^2$ regions. A CTNN has entropy $\ln d$ if and only if the parameter $(a, z)$ is above the red line.}
\label{fig:entropy-diagram-general-case}
\end{figure}

As the end of this section, we give the following example to clarify the investigation of entropy bifurcation diagrams of neural networks on the binary Cayley tree.

\begin{example}
A neural network on the binary Cayley tree is represented as
$$
\dfrac{d}{dt} x_w(t) = - x_w(t) + z + a f(x_{w}(t)) + a_1 f(x_{w1}(t)) + a_2 f(x_{w2}(t)),
$$
where $w \in \Sigma^*$ and $\Sigma = \{1, 2\}$. The necessary and sufficient conditions for $y_w = 1$ and $y_w = -1$ are
$$
a - 1 + z > - (a_1 y_{w1} + a_2 y_{w2})
\quad \text{and} \quad
a - 1 - z > a_1 y_{w1} + a_2 y_{w2},
$$
respectively. Suppose that $a_1, a_2$ satisfy $0 < -a_1 < a_2$. It follows from $a_1 - a_2 < -a_1 - a_2 < a_1 + a_2 < -a_1 + a_2$ that, whenever $a$ and $z$ are fixed, the ``ordered'' basic set of admissible local patterns $\mathcal{B} = \mathcal{B}_+ \bigcup \mathcal{B}_-$ must obey
$$
\mathcal{B}_+ \subseteq \{(+, -, +), (+, +, +), (+, -, -), (+, +, -)\}
$$
and
$$
\mathcal{B}_- \subseteq \{(-, +, -), (-, -, -), (-, +, +), (-, -, +)\}.
$$

\begin{figure}[tbp]
\begin{center}
\includegraphics[scale=0.8,page=6]{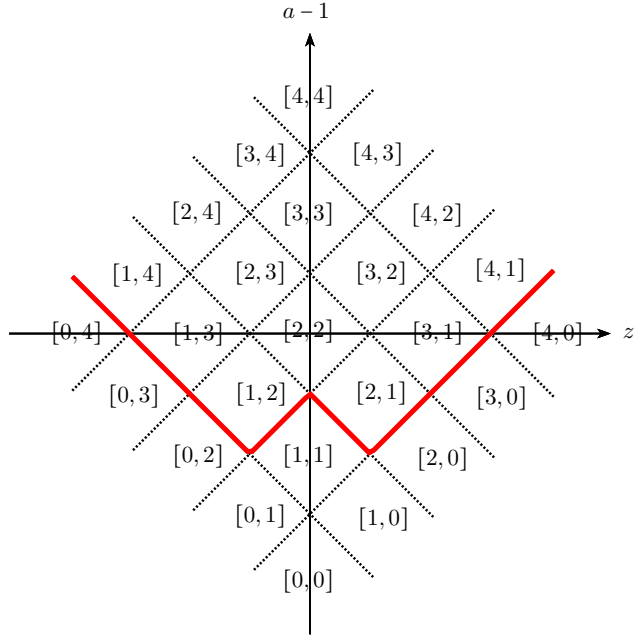}
\end{center}
\caption{Entropy bifurcation diagram of neural networks on the binary Cayley tree. The $a$-$z$ plane is partitioned into $25$ sub-regions.}
\label{fig:entropy-diagram-binary-tree}
\end{figure}

If the parameters $a$ and $z$ locate in the region $[3, 2]$(cf.~Figure \ref{fig:entropy-diagram-binary-tree}), then the basic set is
$$
\mathcal{B}_{[3, 2]} = \{(+, -, +), (+, +, +), (+, -, -), (-, +, -), (-, -, -)\}.
$$
Theorem \ref{thm:CTNN-entropy-set} and Proposition \ref{prop:entropy-region-W-equation} conclude that $h(\mathbf{Y}_{[3, 2]}) = \ln 2$ and $(a, z)$ is critical if and only if $a - 1 = \big| |z| + a_1 \big| - a_2$.
\end{example}

\section{Conclusion and Discussion}

In this paper, motivated by Gollo \emph{et al.}'s works (cf.~\cite{GKC-PCB2009,GKC-PRE2012,GKC-SR2013}), we study the dynamical behavior which tree structure neural networks are capable of. More specifically, we focus on equilibrium solutions known as mosaic solutions since they are related to the long-term memory of the brain and are applied in a wide range of disciplines. Entropy, a frequently used invariant, reveals the growth rate of the amount information stored in a (tree structure) neural network. Alternatively, positive entropy reflects that adding one more neuron stores exponential times of memory relative to the original system.  A small modification of coupling weights resulting in different entropy means the neural network is sensitive or in some critical status. We elaborate the criticality of a neural network by whether or not the neural network is entropy minimal.

After demonstrating the entropy spectrum of tree structure neural networks is discrete, we illustrate the necessary and sufficient condition for determining if a neural network is sensitive. Furthermore, the formula for coupling weights of critical neural networks is indicated.

Since the activation function considered in this article is piecewise linear transformation $f(s) = \dfrac{1}{2} (|s+1| - |s-1|)$, the output patterns of mosaic solutions are binary patterns. That is, the coloring set $\mathcal{A}$ consists of only two symbols. It is of interest that what conclusion we can derive when $\mathcal{A}$ consists of $k$ symbols for some integer $k \geq 3$. Furthermore, we focus on the rooted Cayley tree as the network's topology in the whole discussion; it is also of interest whether or not our results remain true for Bethe lattice. Related work is in preparation.

\bibliographystyle{amsplain}
\bibliography{../../grece}

\end{document}